\newcommand{\dif}{{\rm d}}
\newcommand{\del}{\partial}
\newcommand{\dx}{\delta x}
\newtheorem{thm}{Theorem}
\newtheorem{lem}{Lemma}
\newtheorem{prop}{Proposition}
\newtheorem{defn}{Definition}
\begin{document}
\title{The W4 method: a new multi-dimensional root-finding scheme for nonlinear systems of equations}

\author{Hirotada Okawa}\ead{h.okawa@aoni.waseda.jp}
\address{Yukawa Institute for Theoretical Physics, Kyoto University, Kyoto 606-8502, Japan}
\address{Research Institute for Science and Engineering, Waseda University, Tokyo 169-8555, Japan}
\author{Kotaro Fujisawa}
\address{Research Institute for Science and Engineering, Waseda University, Tokyo 169-8555, Japan}
\author{Yu Yamamoto}
\address{Research Institute for Science and Engineering, Waseda University, Tokyo 169-8555, Japan}
\author{Ryosuke Hirai}
\address{Department of Physics, University of Oxford, Keble Rd, Oxford, OX1 3RH, United Kingdom}
\author{Nobutoshi Yasutake}
\address{Physics Department, Chiba Institute of Technology, Chiba 275-0023, Japan}
\address{Advanced Science Research Center, Japan Atomic Energy Agency, Tokai, Ibaraki 319-1195, Japan}
\author{Hiroki Nagakura}
\address{TAPIR, Walter Burke Institute for Theoretical Physics, Mailcode 350-17, California Institute for Technology, Pasadena, CA 91125, USA}
\address{Department of Astrophysical Sciences, Princeton University, Princeton, NJ 08540, USA}
\author{Shoichi Yamada}
\address{Research Institute for Science and Engineering, Waseda University, Tokyo 169-8555, Japan.}
\address{Science and Engineering, Waseda University, Tokyo 169-8555, Japan.}

\date{\today} 

\begin{abstract} 
 We propose a new class of method for solving nonlinear systems of equations,
 which, among other things,
 has four nice features:
 (i) it is inspired by the mathematical property of damped oscillators,
 (ii) it can be regarded as a simple extention to the Newton-Raphson(NR) method,
 (iii) it has the same local convergence as the NR method does,
 (iv) it has a significantly wider convergence region or the global convergence than that of the NR method.
 In this article, we present the evidence of these properties,
 applying our new method to some examples and comparing it with the NR method.
\end{abstract}

\maketitle

\section{Introduction}\label{sec:intro}
%
One of the most important problems in computational science and
engineering is a root-finding of functions.
It is defined as a problem to solve the following equations numerically:
\begin{eqnarray}
 \bm{F}(\bm{x}) = 0,\label{eq:nonlinearEQs}
\end{eqnarray}
where $\bm{x} \in \mathbb{R}^{N}$, $N\in \mathbb{Z}$ and $\bm{F}: \mathbb{R}^{N}\rightarrow\mathbb{R}^{N}$
is a generic system of nonlinear equations in $\bm{x}$.

Numerical root-finding methods are essentially categorized into two types. 
One is deterministic
methods such as the Newton-Raphson(NR) method~\cite{kelley2003},
which always give the same answer to the same initial condition.
The other is stochastic methods such as the Monte-Carlo method~\cite{jablonski1980},
in which the answer may change in each trial even for idential initial conditions.
The former method normally requires a larger computational cost
but needs a very small number (mostly one) of trials,
whereas the computational cost per trial is small but many trials are
needed in the stochastic method.
 
Although the single-variable problem is rather simple,
it becomes highly nontrivial
when the dimension of variable space is higher than one.
One may choose the NR method
if the dimension is not prohibitively large in terms of numerical cost.
The NR method can be regarded as an iterative solver based on the fixed point theorem
and it is guaranteed to give a solution
as long as the initial guess is sufficiently close to the solution\cite{ortega1970}.
In practice, there are two major drawbacks in the NR method:
(i) very heavy computational cost in the inversion of a large Jacobian matrix
(See Eq.~\eqref{eq:NRmap})
and (ii) the strong dependence of convergence on the initial guess.
The former is evaluated as $\mathcal{O}\left(N^3\right)$
for the direct inversion of an $N\times N$ Jacobian matrix
in the system of $N$-dimensions.
So far, much attention has been paid to issue~(i)
and many efforts have been made to reduce the cost: for example, a
number of quasi-Newton methods have been proposed, which successfully
reduce the computational costs to $\mathcal{O}\left(N^2\right)$\cite{broyden1965,kelley2003}.
As for issue~(ii),
many studies have attempted to accelerate
the normally quadratic convergence of the NR method.
In fact, Halley's and Householder's methods proposed for single-variable
problems are headed in this direction~\cite{householder1970}.
Also for multi-variable problems, a new approach emerged recently~\cite{ramos2015,ramos2017}.

It is the sensitivity of convergence to the initial guess that we focus most in this paper.
It is well known that the NR method fails to find solutions when the initial
guess is far from the solution, in particular, in multi-dimensional problems.
We need to give a good consideration to the initial guess but
there is no guarantee that it is always successful unless we know the solution.
When the NR method fails to find solutions,
we see either a divergence or a permanent oscillation of the successive
solutions of the linearized equations as explicitly shown in Sec.~\ref{sec:problem}.
This is sometimes related to the existence of singularities in the
Jacobian matrix and may be solved by modifying source terms in the
linearized equations~\cite{kou2006,wu2007,hueso2009}.
It should also be emphasized that the inversion of ill-conditioned
matrices usually need high-precision calculations regardless of the
iterative methods employed
and a careless handling could easily produce an incorrect inverse matrix
whose component is wrong from the leading figure.
In such cases, we should pay proper attention to numerical errors accumulated during iterations.
In this article, we focus especially on problems, for which the NR
method fails to find solutions
in spite of the fact that its Jacobian matirix is well-conditioned initially.
The initial guess is likely to be a culprit then. We demonstrate that
our new method can find a solution even when an initial guess is far
from the solution.\\

This paper is organized as follows.
Firstly, we define the problem we deal with in this paper.
It will also make clear the motivation of this work, employing some
concrete problems, in Sec.~\ref{sec:problem}.
Then, we present our new formulation to find roots of a system of nonlinear equations in Sec.~\ref{sec:method},
starting with the descriptions of other conventional methods in the literature on the same basis.
In Sec.~\ref{sec:result}, we demonstrate the capabilities of our method for several sample problems,
comparing them with those of other methods.
Finally, we give a summary and comments on future prospects
in Sec.~\ref{sec:conclusion}.

\section{Problem}\label{sec:problem}
%
As mentioned in the Introduction, our goal is to numerically solve generic
systems of nonlinear equations, Eq.~\eqref{eq:nonlinearEQs}.
In order to make clear the issues
of our concern in this article, we consider some specific examples in
the following.
Although we are interested in multi-variable equations,
we start with a single-variable problem for clarity.

\subsection{single-variable problem}
We first consider the root-finding of the following single-variable equation:
\begin{eqnarray}
 f(x) = \arctan (x) +\sin (x) -1 = 0,\label{eq:simple1d}
\end{eqnarray}
where $x\in \mathbb{R}$.
In the left panel of Fig.~\ref{fig:ex0_simple1d}, we display the
function, which has oscillatory behavior and has multiple solutions only
on the positive $x$-axis($x>0$).
In the NR method, one employs the following iteration map from the $n$-th step to $(n+1)$-th step,
\begin{eqnarray}
 {x}_{n+1} = {x}_{n} - \left\{\frac{\dif f}{\dif x}(x_n)\right\}^{-1}{f}({x}_{n}).\label{eq:NRmap_simple1d}
\end{eqnarray}
As demonstrated in Table~\ref{tab:comp_ex0_simple1d},
as long as it obtains a solution, the
NR method requires only a small number of iteration steps to get to the
solution within a prescribed accuracy, i.e., $\left|f(x)\right|<10^{-6}$
in this example,
 thanks to its well known quadratic-convergence nature.
The table also demonstrates that the NR method fails to find a solution
if the initial guess is smaller than $x_i=-2.0$.
We show typical trajectories of the NR map,
Eq.~\eqref{eq:NRmap_simple1d}, both for the successful and 
failed attempts in the right panel of Fig.~\ref{fig:ex0_simple1d}.
The iteration rapidly converges to one of the solutions$(x=1)$
when the initial guess is
close to the solution whereas
it shows an oscillation between two values and never reaches a solution
if the initial guess is $x_i=-2.0$.
Since the latter oscillation is a consequence of the oscillatory nature
of the function~$f(x)$,
it is natural to soften its slope somewhat by hand in the evaluation of
$x$ in the next step,
\begin{eqnarray}
 {x}_{n+1} = {x}_{n} - \Delta\tau\left\{\frac{\dif f}{\dif x}(x_n)\right\}^{-1}{f}({x}_{n}),\label{eq:DNmap_simple1d}
\end{eqnarray}
where we introduce a damping parameter $\Delta\tau$,
which is a positive real number less than unity.
\begin{figure}[t]
 \begin{tabular}{cc}
  \includegraphics[width=8.4cm,clip]{./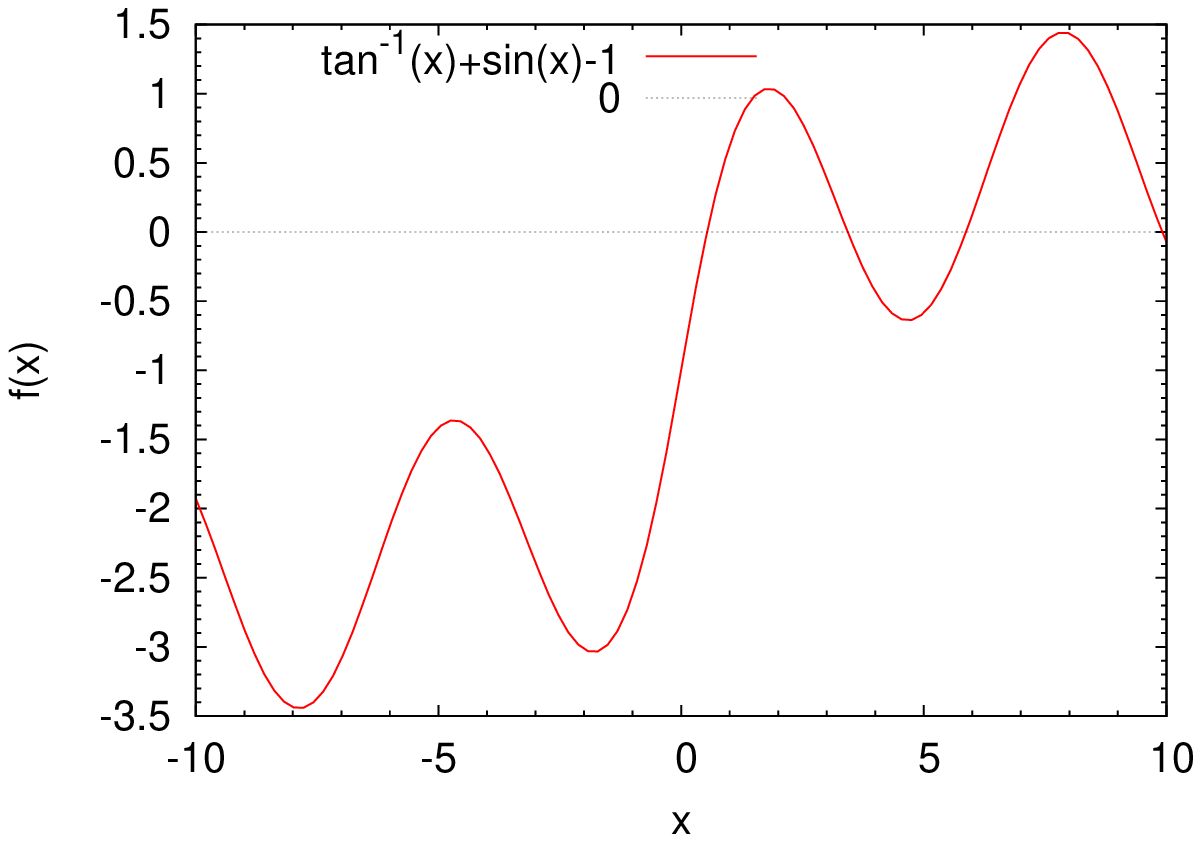}
  & 
  \includegraphics[width=8.4cm,clip]{./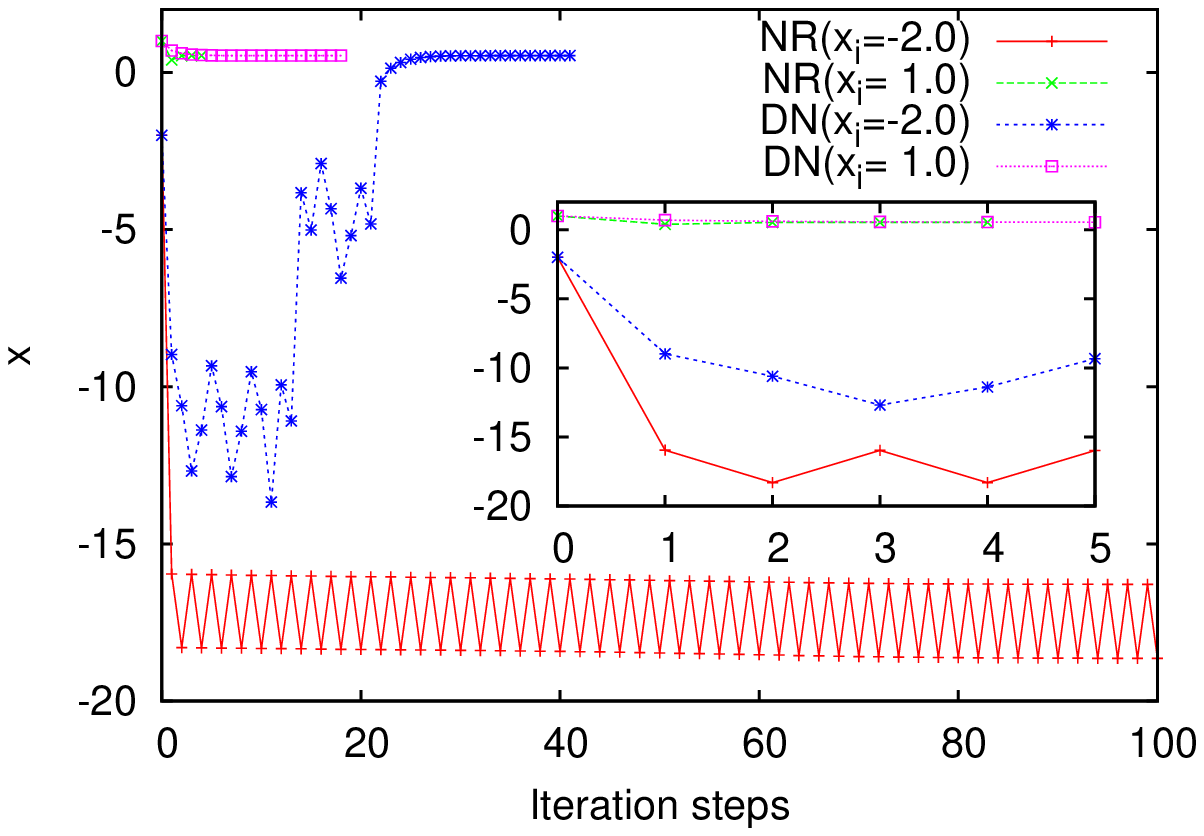}
 \end{tabular}
 \caption{(Left) The function given in~\eqref{eq:simple1d}.
 (Right) Some sequences produced by the iteration maps for
 the NR(blue and purple)
 and DN(orange and green) methods.
 The initial conditions are $x_i=1.0$ and $x_i=-2.0$.
 }
 \label{fig:ex0_simple1d}
\end{figure}
The results for this so-called Damped Newton(DN)
method are presented in Table~\ref{tab:comp_ex0_simple1d}
for the comparison with the original NR method.
Although the number of iteration steps required
to get a solution within the prescribed accuracy increases
substantially in the DN method,
the region of the initial guess where a solution is obtained becomes wider.
This implies that the NR method, if appropriately modified, may find a
solution even for initial guesses that are not very close to the solution
at least for single-variable problems in principle.
\begin{table}[t]
 \begin{center}
  \begin{tabular}{c||c|c|c|c|c|c|c|c|c|c|c|c|c|}
   \diagbox{Method}{$x_i$} & -3.0& -2.5& -2.0& -1.5& -1.0& -0.5& 0& 0.5& 1.0& 1.5& 2.0& 2.5& 3.0\\\hline
   NR & $\infty$& $\infty$& $\infty$& 4& 5& 4& 3& 2& 4& 8& 4& 4& 3\\
   DN & 25& $\infty$& 41& 20& 19& 20& 19& 15& 18& 19& 17& 19& 18\\
  \end{tabular}
 \end{center}
 \caption{Necessary iteration steps to converge by iterative methods in
 the literature from several initial guesses~$x_i$.
 NR method stands for Newton-Raphson method and DN stands for Damped Newton method.
 The damping parameter for the DN method is set as $\Delta\tau=0.5$.
 Stopping criterion in both methods is $|f(x)|<10^{-6}$.
 $\infty$ denotes that no solution is obtained by $10^4$ iteration steps.
 }
 \label{tab:comp_ex0_simple1d}
\end{table}

\subsection{multi-variable problem}
The above situation is changed drastically when the dimension of variable space becomes higher than one.
To observe what happens, we consider the following concrete problem:
\begin{subequations}
 \label{eq:simple2d}
 \begin{align}
  F_1(x,y) &:= x^2 +y^2 -4 = 0,\\
  F_2(x,y) &:= x^2y -1 = 0.
 \end{align}
\end{subequations}
The functions~$F_1$ and $F_2$ represent a circle and a parabola, respectively,
and the roots of this system correspond to the intersections of
these two curves in the $x-y$ plane.
In the NR or DN method for multi-variable problems~\cite{ortega1970,kelley2003},
one can proceed just as in the single-variable case and
employs the following iteration map from the $n$-th step to the
$(n+1)$-th step,
\begin{eqnarray}
 \bm{x}_{n+1} = \bm{x}_{n} - \Delta\tau J^{-1}\bm{F}(\bm{x}_{n}),\label{eq:NRmap}
\end{eqnarray}
where $\bm{F}:= \left(F_1\ F_2\right)^T$ and $J^{-1}$ is the inverse
of the Jacobian matrix associated with Eqs.~\eqref{eq:simple2d},
which is given as
\begin{eqnarray}
 J =
 \begin{bmatrix}
  2x & 2y\\
  2xy & x^2
 \end{bmatrix}.\label{eq:simple2d_Jacobian}
\end{eqnarray}
Then, the iteration maps for $\bm{x}=\left(x\ y\right)^T$ are
expressed explicitly as
\begin{subequations}
 \label{eq:NRmap_simple2d}
 \begin{align}
  x_{n+1} &= x_{n} -\Delta\tau\left[\frac{x_{n}}{2\left(x_{n}^2-2y_{n}^2\right)}F_1 -\frac{y_{n}}{x_{n}\left(x_{n}^2-2y_{n}^2\right)}F_2\right], \\
  y_{n+1} &= y_{n} -\Delta\tau\left[-\frac{y_{n}}{x_{n}^2-2y_{n}^2}F_1 +\frac{1}{x_{n}^2-2y_{n}^2}F_2\right].
 \end{align}
\end{subequations}
\begin{figure}[t]
 \begin{tabular}{cc}
  \includegraphics[width=8.cm,clip]{./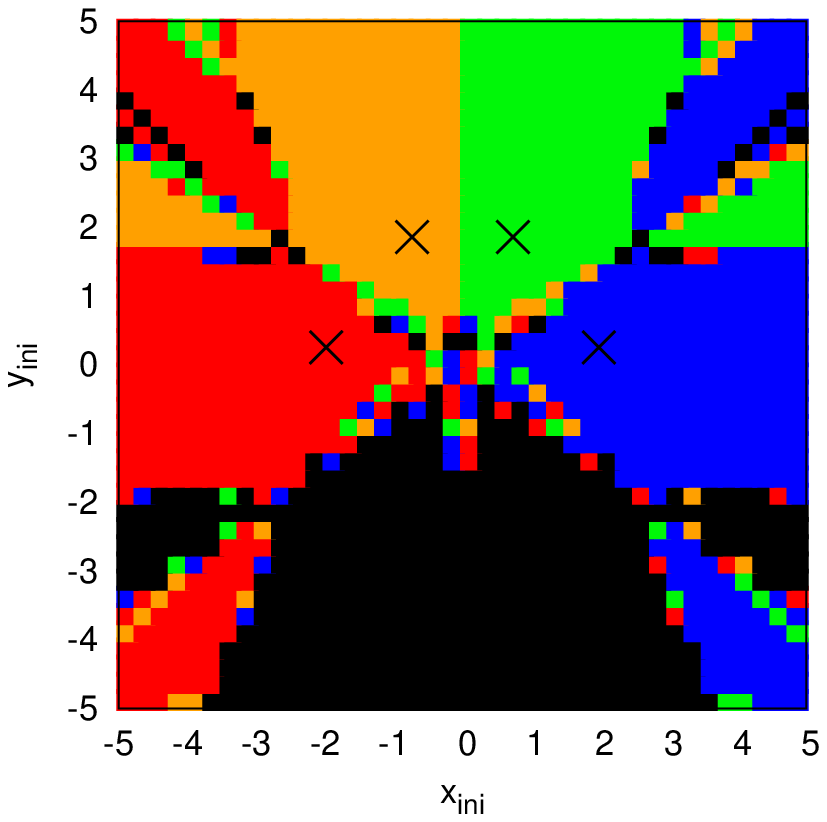}&
  \includegraphics[width=8.cm,clip]{./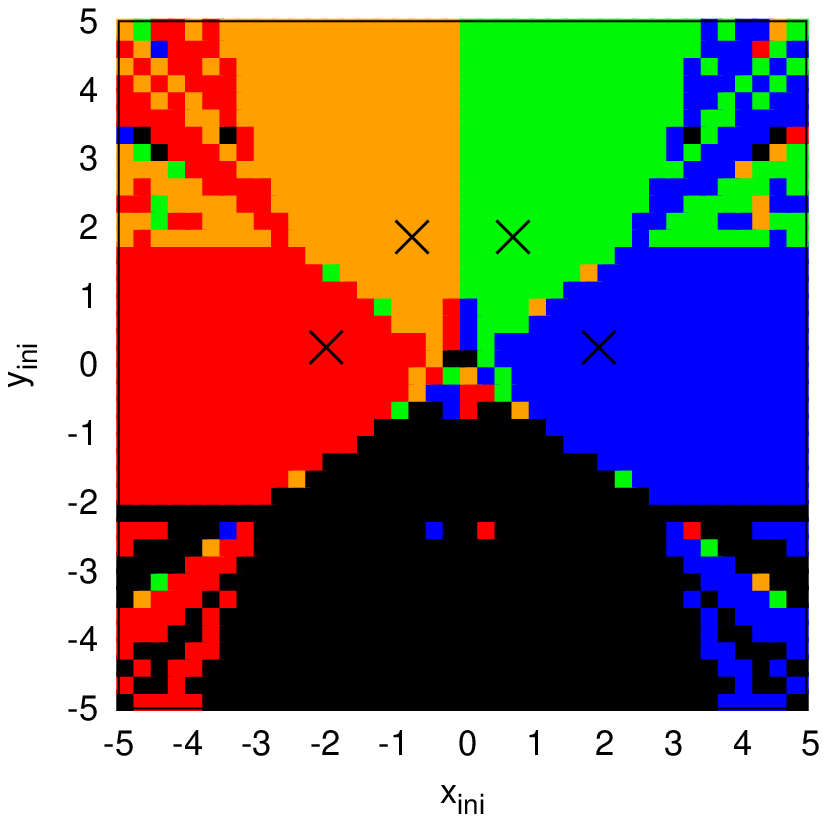}
 \end{tabular}
 \caption{Initial guess dependence by the NR method(Left) and the DN method(right))
 with $\Delta\tau=0.5$(Right).
 The red, orange, green and blue correspond to all solutions of Eqs.~\eqref{eq:simple2d},
 while the black means the method cannot find any solution.
 In addition, four solutions are expressed by the black crosses.
 }
 \label{fig:ex0_simple2d_NR}
\end{figure}

In Fig.~\ref{fig:ex0_simple2d_NR}, we display the Newton basin for the NR and DN maps
in the left and right panels, respectively,
as an indicator of the initial-guess dependence.
Different colors(red, orange, green, blue) correspond to one of the 
four solutions,
\begin{align}
 (x^{*},y^{*}) \sim
 (\pm 1.9837924, 0.25410169),\quad
 (\pm 0.73307679, 1.8608059),\label{eq:simple2d_Solution}
\end{align}
to which each point converges.
The black color implies that none of the solutions has been reached within $1000$ iteration steps.
For both methods, the local convergence is observed more or less as expected.
It is evident, however, that the damping factor introduced in the DN
method does not help as
much in finding solutions in this two dimensional problem
compared with the one-dimensional counterpart considered in the previous section.\\

\section{The New Method}\label{sec:method}
%

\subsection{The NR method as a relaxation}\label{ssec:NR}
The iteration map given in Eq.~\eqref{eq:NRmap} can also be regarded
as a discretized form of the continuous time-evolution equation:
\begin{eqnarray}
 \frac{\dif \bm{x}}{\dif\tau} =-J^{-1}\bm{F}(\bm{x}).\label{eq:NRbase0}
\end{eqnarray}
We will consider a bit more general time-evolution equation with a
preconditioning matrix~$M$ given as
\begin{eqnarray}
 \frac{\dif \bm{x}}{\dif\tau} =-M\bm{F}(\bm{x}).\label{eq:NRbase1}
\end{eqnarray}
The corresponding discretized form is obtained by the Euler forward
discretization in time
with the time interval~$\Delta \tau$ as
\begin{eqnarray}
 \bm{x}_{n+1} = \bm{x}_{n} - \Delta\tau M\bm{F}(\bm{x}_{n}),\label{eq:NRmap_M}
\end{eqnarray}
where $\bm{x}_{n}$ is the discretized solution at the $n$-th iteration step.
If one employs the inverse of the Jacobian matrix as the preconditioner~$M=J^{-1}$,
then Eq.~\eqref{eq:NRmap} is recovered.

\begin{thm}\label{thm:NRmap_M}
 Suppose the solution of Eq.~\eqref{eq:nonlinearEQs} exists
 and let $\bm{F}:\mathbb{R}^{n}\rightarrow \mathbb{R}^{n}$ be Lipschitz
 continuous near the solution~$\bm{x}^{*}$,
 then the iteration map~\eqref{eq:NRmap_M}
 for~$M=J^{-1}$ with $0<\Delta\tau<1$ converges linearly to the
 solution~$\bm{x}^{*}$
 starting from an arbitrary initial guess sufficiently close to the solution.
\end{thm}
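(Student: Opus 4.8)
The plan is to read the iteration \eqref{eq:NRmap} (with $M=J^{-1}$) as a fixed-point iteration $\bm{x}_{n+1}=\bm{G}(\bm{x}_n)$ for the map $\bm{G}(\bm{x}):=\bm{x}-\Delta\tau\,J(\bm{x})^{-1}\bm{F}(\bm{x})$, and to show that on a sufficiently small ball $B_\rho(\bm{x}^*)$ the map $\bm{G}$ contracts the distance to $\bm{x}^*$ by a factor strictly below one. First I would make explicit the regularity that the statement tacitly needs: for $J(\bm{x})^{-1}\bm{F}(\bm{x})$ to make sense near $\bm{x}^*$, $\bm{F}$ should be $C^1$ there with $J(\bm{x}^*)$ nonsingular, so that---$\det J$ being continuous---$J(\bm{x})$ is invertible throughout some $B_\rho(\bm{x}^*)$, and the Lipschitz hypothesis should be read as ``$J$ is Lipschitz on $B_\rho(\bm{x}^*)$ with some constant $L$''. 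Since $\bm{F}(\bm{x}^*)=0$ we have $\bm{G}(\bm{x}^*)=\bm{x}^*$, so $\bm{x}^*$ is the relevant fixed point.

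Next I would estimate the error $\bm{e}_n:=\bm{x}_n-\bm{x}^*$. Writing $\bm{F}(\bm{x}_n)=\bm{F}(\bm{x}_n)-\bm{F}(\bm{x}^*)=\bar J_n\bm{e}_n$ with $\bar J_n:=\int_0^1 J(\bm{x}^*+t\bm{e}_n)\,\dif t$, one obtains
\begin{equation*}
 \bm{e}_{n+1}=\bigl[I-\Delta\tau\,J(\bm{x}_n)^{-1}\bar J_n\bigr]\bm{e}_n
 =\bigl[(1-\Delta\tau)I-\Delta\tau\,J(\bm{x}_n)^{-1}\bigl(\bar J_n-J(\bm{x}_n)\bigr)\bigr]\bm{e}_n .
\end{equation*}
From the Banach perturbation lemma I would get a constant $\beta$ with $\|J(\bm{x}_n)^{-1}\|\le\beta$ on $B_\rho(\bm{x}^*)$ (shrinking $\rho$ if needed), and from the Lipschitz bound $\|\bar J_n-J(\bm{x}_n)\|\le\int_0^1 L\,(1-t)\|\bm{e}_n\|\,\dif t=\tfrac{L}{2}\|\bm{e}_n\|$, whence $\|\bm{e}_{n+1}\|\le\bigl[(1-\Delta\tau)+\tfrac12\Delta\tau\,\beta L\,\|\bm{e}_n\|\bigr]\|\bm{e}_n\|$.

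Then I would close the argument by induction on the ball size: choosing $\delta\le\rho$ with $\tfrac12\beta L\,\delta<\tfrac12$, any starting point with $\|\bm{e}_0\|<\delta$ has $\|\bm{e}_1\|\le(1-\tfrac{\Delta\tau}{2})\|\bm{e}_0\|<\delta$, and the same bound then propagates to all $n$, giving $\|\bm{e}_n\|\le(1-\tfrac{\Delta\tau}{2})^n\|\bm{e}_0\|\to0$, i.e.\ linear convergence. Taking $\delta$ small enough that $\tfrac12\beta L\,\delta<\varepsilon$ shows the asymptotic rate is in fact $1-\Delta\tau$, consistent with $D\bm{G}(\bm{x}^*)=I-\Delta\tau\,J(\bm{x}^*)^{-1}D\bm{F}(\bm{x}^*)=(1-\Delta\tau)I$ (the term from differentiating $J^{-1}$ drops out because it is multiplied by $\bm{F}(\bm{x}^*)=0$), so one could alternatively just quote Ostrowski's theorem on fixed-point iterations. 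The steps are routine in substance; the load-bearing bookkeeping is tracking the three neighbourhood radii---invertibility of $J$, the uniform bound on $J^{-1}$, and the smallness that pushes the contraction factor below one---and maintaining by induction that every iterate stays inside the ball. The only genuine modelling choice is how to interpret the regularity hypothesis, since ``$\bm{F}$ Lipschitz'' as literally stated is slightly too weak for $J$ and the estimate above to be meaningful.
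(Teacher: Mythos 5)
Your proof is correct, and it is the rigorous version of the same underlying idea the paper uses: linearize $\bm{F}$ about $\bm{x}^{*}$ so that with $M=J^{-1}$ the error-propagation matrix collapses to $(1-\Delta\tau)I$, giving linear convergence at rate $1-\Delta\tau$. The paper, however, stops at the formal step: it Taylor-expands, writes $\bm{e}_{n+1}=(I-\Delta\tau MJ)\bm{e}_{n}$ with the $\mathcal{O}(\|\bm{e}\|^{2})$ remainder silently discarded, and then declares $\lim_{n\to\infty}\|\bm{e}_{n+1}\|/\|\bm{e}_{0}\|=(1-\Delta\tau)^{n+1}\to 0$. What you have added --- the integral mean-value form $\bm{F}(\bm{x}_n)=\bar J_n\bm{e}_n$, the split $\bigl[(1-\Delta\tau)I-\Delta\tau J(\bm{x}_n)^{-1}(\bar J_n-J(\bm{x}_n))\bigr]$, the Banach-perturbation bound on $J^{-1}$, the Lipschitz bound $\|\bar J_n-J(\bm{x}_n)\|\le\tfrac{L}{2}\|\bm{e}_n\|$, and the inductive ball-trapping argument --- is exactly the bookkeeping the paper omits and is what turns the heuristic into a theorem. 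Your remark that the stated hypothesis is slightly too weak is also on point: Lipschitz continuity of $\bm{F}$ alone does not ensure that $J$ exists, is continuous, or is nonsingular near $\bm{x}^{*}$, all of which the map $\bm{x}-\Delta\tau J(\bm{x})^{-1}\bm{F}(\bm{x})$ tacitly requires; the natural reading (which the paper uses implicitly) is $\bm{F}\in C^{1}$ with $J(\bm{x}^{*})$ invertible and $J$ Lipschitz near $\bm{x}^{*}$. So: same route, but you close the gaps the paper leaves open, and your alternative one-liner via $D\bm{G}(\bm{x}^{*})=(1-\Delta\tau)I$ plus Ostrowski is an equally valid shortcut that the paper could have cited.
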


\begin{proof}
 The Taylor expansion of the source function~$\bm{F}(\bm{x})$ around the solution~$\bm{x}^{*}$
 up to the first order in the error defined as $\bm{e}_n := \bm{x}^{*} -\bm{x}_n$ gives
 \begin{eqnarray}
  \bm{F}\left(\bm{x}^{*}\right)
   = \bm{F}\left(\bm{x}_{n}\right) +J \bm{e}_n +\mathcal{O}(||\bm{e}||^2).
 \end{eqnarray}
 Since $\bm{x}^{*}$ is the solution and $\bm{F}(\bm{x}^{*})=0$,
 one can express $\bm{F}(\bm{x}_n)$ in Eq.~\eqref{eq:NRmap_M}
 in terms of the Jacobian matrix and the error to yield
 the error propagation equation as
 \begin{eqnarray}
  \bm{e}_{n+1} = \left( I - \Delta\tau MJ\right) \bm{e}_{n},
 \end{eqnarray}
 where $I$ is the $N\times N$ identity matrix.
 Under the present condition~$M=J^{-1}$ and
 $0<\Delta\tau<1$ the error converges to zero in  the~$n\rightarrow\infty$
 limit\footnote{In general, the necessary and sufficient conditions for
 the map~\eqref{eq:NRmap_M} to converge to a solution are that the
 absolute values of all the eigenvalues for the matrix~$E - \Delta\tau
 MJ$ should be less than unity.},
 \begin{eqnarray}
  \lim_{n\rightarrow\infty}\frac{||\bm{e}_{n+1}||}{||\bm{e}_{0}||} =
   \lim_{n\rightarrow\infty}\left(1-\Delta\tau\right)^{n+1} = 0.
 \end{eqnarray}
 As a result, we obtain the solution~$\bm{x}^{*}$ by the iteration map~\eqref{eq:NRmap_M}.
\end{proof}
 We again emphasize that
this theorem guarantees only the local convergence,
i.e. a solution is obtained only when the initial guess is close enough
to the solution.

\subsection{A new approach for single-variable problems}\label{ssec:W41D}
Now we proceed to the new approach
we propose in this article.
For a nonlinear equation with a single variable
\begin{eqnarray}
 F(x)=0,
  \label{eq:single1}
\end{eqnarray}
we consider the following time-evolution equation with
the second derivative in time
\begin{eqnarray}
 \frac{\dif^{2} x}{\dif \tau^{2}} +c\frac{\dif x}{\dif \tau}
  +\left\{\frac{{\rm d}F}{{\rm d}x}\right\}^{-1} F(x) =0,
\label{eq:W4equation0}
\end{eqnarray}
or equivalently two first-order equations,
\begin{subequations}
 \label{eq:W4equation1}
 \begin{align}
  \frac{\dif x}{\dif\tau} := \dot{x} &= p,\\
  \frac{\dif p}{\dif\tau} := \dot{p} &= -cp -\left\{\frac{{\rm d}F}{{\rm d}x} \right\}^{-1} F(x),
 \end{align}
\end{subequations}
where the dot notation is introduced for the derivative with respect to $\tau$ for
later convenience and $c\in \mathbb{R}$ is a constant parameter.
These are nothing but damped oscillator's equations of motion if $F(x)=x$
and $c>0$.

\begin{lem}\label{lem:decaymode}
 Suppose there exists a solution of the nonlinear equation~\eqref{eq:single1}
 indicated with $x^{*}$,
 if one sets the parameter~$c$=2,
 then a sufficiently small deviation from $x^{*}$
 always decays during its time evolution~\eqref{eq:W4equation0}.
\end{lem}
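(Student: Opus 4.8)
The plan is to linearize \eqref{eq:W4equation0} about the root $x^{*}$ and read off the decay from the resulting constant-coefficient damped-oscillator equation. First I would write $x(\tau) = x^{*} + \dx(\tau)$ with $\dx$ small and Taylor-expand the source term: since $F(x^{*}) = 0$, and assuming $\tfrac{\dif F}{\dif x}(x^{*}) \neq 0$ so that the preconditioner $\{\dif F/\dif x\}^{-1}$ is well defined in a neighborhood of $x^{*}$, one obtains
\begin{equation}
 \left\{\frac{\dif F}{\dif x}(x)\right\}^{-1} F(x) = \dx + \mathcal{O}(\dx^{2}).
\end{equation}
Substituting this into \eqref{eq:W4equation0} and discarding the quadratic remainder yields the linearized equation $\ddot{\dx} + c\,\dot{\dx} + \dx = 0$, i.e. exactly a damped harmonic oscillator, as already anticipated in the text.

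Next I would analyze this linear ODE through its characteristic polynomial $\lambda^{2} + c\lambda + 1 = 0$, whose roots are $\lambda_{\pm} = \tfrac{1}{2}\bigl(-c \pm \sqrt{c^{2}-4}\bigr)$. For any $c>0$ both roots have strictly negative real part, so the homogeneous solution decays; setting $c = 2$ in particular produces the degenerate root $\lambda_{+} = \lambda_{-} = -1$, so the general solution of the linearized problem is $\dx(\tau) = (A + B\tau)\,e^{-\tau}$, which tends to $0$ as $\tau \to \infty$ for any constants $A,B$ fixed by the initial data. Hence, at the linear level, every sufficiently small deviation from $x^{*}$ decays, and with $c = 2$ it does so monotonically in the exponential envelope (the critically damped case), which I would note is the borderline value separating oscillatory ($c<2$) from purely exponential ($c>2$) relaxation.

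The step that needs real care — and the main obstacle — is passing from the linearized statement back to the genuinely nonlinear system \eqref{eq:W4equation1}, i.e. arguing that the discarded $\mathcal{O}(\dx^{2})$ term cannot destabilize the trajectory. I would handle this by recasting \eqref{eq:W4equation1} as a first-order autonomous system for $(\dx, p)$, observing that its Jacobian at the fixed point $(\dx, p) = (0,0)$ has both eigenvalues equal to $-1$, hence with strictly negative real part, and then invoking the standard linearized-stability theorem for autonomous ODEs (Lyapunov's indirect method) to conclude local asymptotic stability of $x^{*}$. If a self-contained argument is preferred to citing the general theorem, one can instead exhibit an explicit Lyapunov function — a perturbed energy of the form $V = \tfrac12 p^{2} + \tfrac12 \dx^{2} + \epsilon\, \dx p$ with $\epsilon>0$ small — and verify $\dot V < 0$ in a punctured neighborhood of the origin; the Lipschitz-type regularity of $F$ near $x^{*}$ that is already assumed in the preceding theorem is exactly what one needs to bound the remainder and close this estimate.
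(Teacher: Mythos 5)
Your linearization step reproduces the paper's proof essentially verbatim: expand $F$ about $x^{*}$, substitute $x = x^{*} + \dx$, obtain $\ddot{\dx} + c\,\dot{\dx} + \dx = 0$, and read off decay from the characteristic roots. The paper parametrizes the characteristic mode as $\dx = e^{i\omega\tau}$ with $\omega = (ic \pm \sqrt{-c^{2}+4})/2$, which is just a trivial reparametrization ($\lambda = i\omega$) of your $\lambda^{2} + c\lambda + 1 = 0$; you are slightly more careful in noting that $c=2$ gives a repeated root and hence the secular factor $(A + B\tau)e^{-\tau}$, whereas the paper simply writes $\dx = e^{-\tau}$, omitting the $\tau$-polynomial. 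Where you genuinely go beyond the paper is the final paragraph: the paper stops at the linearized level and implicitly treats decay of the linearization as sufficient, while you explicitly flag the passage from the linearized ODE back to the nonlinear system \eqref{eq:W4equation1} as a gap and close it via Lyapunov's indirect method (or a hand-built quadratic Lyapunov function). That extra step is not in the paper, and it is exactly the right thing to add if one wants a rigorous local-stability statement rather than a formal one; the paper instead leans on Theorem~\ref{thm:bendixson} (Bendixson's criterion) to rule out periodic orbits globally and leaves the local nonlinear decay implicit. In short: same core computation, but you supply the nonlinear-stability justification the paper leaves unstated.
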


\begin{proof}
 The Taylor expansion around the solution yields
 \begin{eqnarray}
  F(x^{*}+\dx) = F(x^{*}) +\frac{\dif F}{\dif x}\dx
   +\mathcal{O}(\dx^2).
 \end{eqnarray}
 One can then obtain the equation for the deviation~$\dx$
 valid up to the first order
 by the substitution~$x=x^{*}+\dx$ into Eq.~\eqref{eq:W4equation0}
 as
 \begin{eqnarray}
  \ddot{\dx} +c\dot{\dx} +\dx = 0.\label{eq:perturbation0}
 \end{eqnarray}
 There exists a characteristic mode in the
 equation~\eqref{eq:perturbation0}, which is
 explicitly given by assuming $\dx = e^{i\omega\tau}$ as
 \begin{eqnarray}
  \omega = \frac{ic \pm\sqrt{-c^2 +4}}{2}.
 \end{eqnarray} 
 Thus, the deviation from the solution~$x^{*}$
 with $c=2$ decays exponentially as $\dx=e^{-\tau}$ during the time evolution.
\end{proof}

\begin{thm}(Bendixson's criterion)\label{thm:bendixson}
 If on a simply connected region $\mathcal{D}\subset R^2$ in the phase space~$(x,p)$
 the parameter~$c$ is nonzero, then there is no periodic orbits in $\mathcal{D}$.
\end{thm}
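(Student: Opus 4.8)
The plan is to run the standard divergence argument that proves Bendixson's criterion, specialized to the planar vector field defined by the first-order system~\eqref{eq:W4equation1}. First I would write that system as $\dot{\bm{x}}=\bm{f}(x,p)$ with $\bm{f}=(f_1,f_2)$, $f_1=p$ and $f_2=-cp-\{\dif F/\dif x\}^{-1}F(x)$, noting that wherever the flow is defined one has $\dif F/\dif x\neq 0$, so $\bm{f}$ is continuously differentiable on $\mathcal{D}$ (assuming, as elsewhere in the paper, that $F$ is smooth enough). The only computation needed is the divergence of $\bm{f}$, which collapses to a constant,
\begin{eqnarray}
 \frac{\del f_1}{\del x}+\frac{\del f_2}{\del p}
  = 0 + (-c) = -c,
\end{eqnarray}
so it is nonzero and of fixed sign exactly under the hypothesis $c\neq 0$.

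Next I would argue by contradiction: suppose $\gamma$ is a periodic orbit of the flow contained in $\mathcal{D}$. By uniqueness of solutions $\gamma$ is a simple closed curve, so by the Jordan curve theorem it bounds a region $\mathcal{S}$; because $\mathcal{D}$ is simply connected, $\mathcal{S}\subset\mathcal{D}$ and $\bm{f}$ is $C^1$ on $\overline{\mathcal{S}}$. Applying Green's theorem,
\begin{eqnarray}
 \iint_{\mathcal{S}}\left(\frac{\del f_1}{\del x}+\frac{\del f_2}{\del p}\right)\dif x\,\dif p
  = \oint_{\gamma}\left(f_1\,\dif p - f_2\,\dif x\right),
\end{eqnarray}
and along $\gamma$ one has $\dif x=f_1\,\dif\tau$ and $\dif p=f_2\,\dif\tau$, so the line integral equals $\oint_{\gamma}(f_1 f_2 - f_2 f_1)\,\dif\tau=0$. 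On the other hand the area integral equals $-c\,\mathrm{Area}(\mathcal{S})$, which is nonzero since a periodic orbit encloses a region of positive area and $c\neq 0$. This contradiction rules out any periodic orbit lying in $\mathcal{D}$.

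The argument is short once the picture is set up, so the real content is in the two checkpoints. The first is regularity: Green's theorem needs $\bm{f}\in C^1$ on the closed region bounded by the orbit, which relies on $\dif F/\dif x$ being nonvanishing there (implicit whenever \eqref{eq:W4equation1} is used) together with enough smoothness of $F$; I would state this as a running assumption. The second, and the one I expect to be the genuine subtlety, is the topological step --- invoking the Jordan curve theorem and then using simple connectedness of $\mathcal{D}$ to conclude that the \emph{interior} of the orbit, not merely the orbit itself, lies in $\mathcal{D}$ so that the area integral is legitimate. Everything else is routine vector calculus.
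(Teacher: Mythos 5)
Your proposal is correct and follows essentially the same route as the paper: both compute the divergence of the planar vector field to be the constant $-c$, apply Green's theorem to the region bounded by a hypothetical closed orbit, and note that the line integral $\oint_\gamma(\dot{x}\,\dif p-\dot{p}\,\dif x)$ vanishes while the area integral equals $-c$ times a positive area, yielding a contradiction when $c\neq 0$. Your write-up is somewhat more careful than the paper's, making explicit the regularity hypothesis on $\dif F/\dif x$ and the role of the Jordan curve theorem plus simple connectedness in guaranteeing that the enclosed region lies inside $\mathcal{D}$, but these are refinements of, not departures from, the paper's argument.
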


\begin{proof}
 Any orbit of system~\eqref{eq:W4equation1} in the two-dimensional real phase space~$(x,p)$ is described by
 $\dif p/\dif x = \dot{p}/\dot{x}$.
 A closed orbit~$\Gamma$ in the phase space satisfies
 \begin{align}
  \oint_{\Gamma} \Bigl( \dot{x}\dif p -\dot{p}\dif x \Bigr)= 0.
 \end{align}
 However, for a closed area~$\mathcal{D}$ bounded by $\Gamma$, 
 Green's theorem gives 
 \begin{align}
  \oint_{\Gamma}\Bigl( \dot{x}\dif p -\dot{p}\dif x \Bigr)
  = \int\!\!\!\!\int_{\mathcal{D}}\left(\frac{\del \dot{x}}{\del x}
  +\frac{\del \dot{p}}{\del p}
  \right)\dif x\dif p = -c \mathcal{D}.
 \end{align}
 Consequently, no periodic orbit exists inside $\mathcal{D}$ in the phase space
 with nonzero parameter~$c$.
\end{proof}

Lemma~\ref{lem:decaymode} combined with Theorem~\ref{thm:bendixson} implies that 
the solution of our new time-evolution equation~\eqref{eq:W4equation0}
is all settled down asymptotically to one of the solutions of Eq.~\eqref{eq:single1}
\footnote{This analysis will not hold for higher
dimensional problems since chaos may kick in and trajectories will not
be closed.}.\\

We now show that the discretized version of
equations~\eqref{eq:W4equation1}:
\begin{subequations}
 \label{eq:W4map_1}
 \begin{align}
  x_{n+1} &= x_{n} + \Delta\tau\ p_{n},\label{eq:W4map_1x}\\
  p_{n+1} &= (1-c\Delta\tau) p_{n}
  -\Delta\tau\left\{\frac{{\rm d}F}{{\rm d}x} \right\}^{-1} F(x_{n}),\label{eq:W4map_1p}
 \end{align}
\end{subequations}
also converges to the same solution under a certain condition.
In deriving these equations, we
employ the explicit Euler difference in time with an interval~$\Delta\tau$.

\begin{prop}
 Suppose there exists a solution of Eq.~\eqref{eq:single1}
 and let $F:\mathbb{R}\rightarrow \mathbb{R}$ be Lipschitz
 continuous near the solution~$x^{*}$,
 then the iteration map~\eqref{eq:W4map_1} with~$c=2$
 and $0<\Delta\tau<1$ yields a series of~$x_n$ that converges to the
 solution~$x^{*}$
 starting from $p_0=0$ and an arbitrary initial guess~$x_0$ that is
 sufficiently close to the solution~$x=x^{*}$.
\end{prop}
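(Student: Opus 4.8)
The plan is to follow the template of the proof of Theorem~\ref{thm:NRmap_M}: linearize the map \eqref{eq:W4map_1} about $x^{*}$, obtain a two-component error-propagation recursion, and show that the associated amplification matrix is a contraction once $c=2$ and $0<\Delta\tau<1$. First I would set $e_n := x^{*}-x_n$, keep $p_n$ as a second variable, and Taylor-expand the source about the solution, $F(x_n)=F(x^{*})-\frac{\dif F}{\dif x}e_n+\mathcal{O}(e_n^{2})$. Using $F(x^{*})=0$ together with the nonvanishing of $\dif F/\dif x$ near $x^{*}$ (the same tacit hypothesis needed for $\{\dif F/\dif x\}^{-1}$ to make sense, exactly as in Theorem~\ref{thm:NRmap_M}), this gives $\{\dif F/\dif x\}^{-1}F(x_n)=-e_n+\mathcal{O}(e_n^{2})$, so substituting into \eqref{eq:W4map_1} produces
\begin{align}
 \begin{pmatrix} e_{n+1}\\ p_{n+1}\end{pmatrix}
 = A\begin{pmatrix} e_{n}\\ p_{n}\end{pmatrix}+\mathcal{O}(e_n^{2}),
 \qquad
 A := \begin{pmatrix} 1 & -\Delta\tau\\ \Delta\tau & 1-c\Delta\tau\end{pmatrix} .
\end{align}

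Next I would analyze the spectrum of $A$. Its characteristic polynomial is $\lambda^{2}-(2-c\Delta\tau)\lambda+(1-c\Delta\tau+\Delta\tau^{2})=0$, whose discriminant equals $(c^{2}-4)\Delta\tau^{2}$; that this vanishes precisely at $c=2$ is the discrete counterpart of the critical-damping condition already seen in Lemma~\ref{lem:decaymode}. Hence for $c=2$ the matrix $A$ has the single eigenvalue $\lambda=1-\Delta\tau$ of algebraic multiplicity two, and $|1-\Delta\tau|<1$ whenever $0<\Delta\tau<1$, so $\rho(A)<1$. Moreover, writing $A=(1-\Delta\tau)I+N$ with $N:=A-(1-\Delta\tau)I$, a one-line check gives $N^{2}=0$, so $A^{n}=(1-\Delta\tau)^{n}I+n(1-\Delta\tau)^{n-1}N$ and therefore $\|A^{n}\|\to 0$ as $n\to\infty$.

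Finally I would promote this linear statement to convergence of the genuine iteration started from $(e_0,p_0)=(x^{*}-x_0,\,0)$. Since $\rho(A)<1$ there is a vector norm in which $A$ is an honest contraction, or one simply uses the explicit estimate $\|A^{n}\|\le(1-\Delta\tau)^{n}+n(1-\Delta\tau)^{n-1}\|N\|$; feeding in the quadratic remainder, the standard Ostrowski-type local-stability argument applies, i.e.\ for $x_0$ sufficiently close to $x^{*}$ the iterates remain in a neighborhood on which the $\mathcal{O}(e_n^{2})$ term is dominated by the contraction, whence $\|(e_n,p_n)\|\to 0$ and in particular $x_n\to x^{*}$. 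The main obstacle I anticipate is precisely this last passage: at $c=2$ the matrix $A$ is a genuine Jordan block rather than a scalar multiple of the identity, so one cannot just invoke $\|A\|_{2}<1$; one must instead argue through $\rho(A)$ (equivalently a Jordan-adapted norm) while carrying the nonlinear remainder along. The remaining ingredients — the Taylor expansion, the eigenvalue computation, and the nilpotency of $N$ — are routine.
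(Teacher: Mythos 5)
Your proposal is correct and follows the paper's own route: linearize the map about $(x^{*},0)$, read off the $2\times2$ error-propagation matrix, and observe that at $c=2$ it has the repeated eigenvalue $1-\Delta\tau\in(0,1)$. The paper expresses this by writing down the closed-form solution of the linear recursion — which is exactly your $A^{n}=(1-\Delta\tau)^{n}I+n(1-\Delta\tau)^{n-1}N$ applied to the initial vector $(e_{0},0)$ — and, as in Theorem~\ref{thm:NRmap_M}, leaves both the Jordan-block subtlety and the control of the $\mathcal{O}(e_{n}^{2})$ remainder implicit, whereas you flag and handle them explicitly.
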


\begin{proof}
Just as in the proof of Theorem~\ref{thm:NRmap_M},
we evaluate the error propagation for the map~\eqref{eq:W4map_1}
 up to the first order in $e_{n}^{(x)}:=x^{*}-x_{n}$ and
 $e_{n}^{(p)}:=0-p_{n}$,
 using $F(x_{n})= \frac{{\rm d}F}{{\rm d}x} e_{n}^{(x)}$, as follows:
 \begin{subequations}
  \label{eq:W4map1_linear}
   \begin{align}
    e_{n+1}^{(x)} &= e_{n}^{(x)} -\Delta\tau e_{n}^{(p)},\label{eq:W4map1_linear_x}\\
    e_{n+1}^{(p)} &= (1-2\Delta\tau)e_{n}^{(p)} +\Delta\tau e_{n}^{(x)}.\label{eq:W4map1_linear_p}
   \end{align}
 \end{subequations}
 We then obtain the following solution to these equations
 \begin{subequations}
  \label{eq:sol_W4map1_linear}
  \begin{align}
   e^{(x)}_n &= e^{(x)}_0\left(1-\Delta\tau\right)^{n-1}\left[1+\left(n-1\right)\Delta\tau\right],\\
   e^{(p)}_n &= ne^{(x)}_0\Delta\tau \left(1-\Delta\tau\right)^{n-1},
  \end{align}
 \end{subequations}
which can be easily confirmed by substitution.
The convergence is now evident for~$0<\Delta\tau<1$,
 \begin{eqnarray}
  \lim_{n\rightarrow\infty}e^{(x)}_{n}=
   \lim_{n\rightarrow\infty}e^{(p)}_{n}=0.
 \end{eqnarray}
\end{proof}

\subsection{The new formulation for multi-variable problems}\label{ssec:W4}
Now we move on to the multi-variable case,
the main part of this paper.
Unlike the NR method discussed in Sec.~\ref{ssec:NR},
the following second-order time-evolution equations
instead of the first-order ones, Eq.~\eqref{eq:NRbase1}:
\begin{eqnarray}
 \frac{\dif^2 \bm{x}}{\dif \tau^2}
  +M_1\frac{\dif \bm{x}}{\dif\tau} +M_2\bm{F}(\bm{x}) = 0,\label{eq:W4base}
\end{eqnarray}
where we introduce two preconditioning matrices, $M_1, M_2: \mathbb{R}^{N}\rightarrow \mathbb{R}^{N}$.
These equations are then divided into two sets of first-order
differential equations as follows:
\begin{subequations}
 \label{eq:W4equation}
 \begin{align}
  \frac{\dif \bm{x}}{\dif\tau} &= X\bm{p},\\
  \frac{\dif \bm{p}}{\dif\tau} &= -2\bm{p} -Y\bm{F},
 \end{align}
\end{subequations}
which $\bm{x}\in \mathbb{R}^{N}$ and $\bm{p}\in \mathbb{R}^{N}$ are
generalized coordinates and associated momenta, respectively;
another pair of preconditioners~$X$ and $Y$ are introduced,
which will be determined later.
We also adopt the damping factor~$c=2$ for the multi-variable case.
The explicit Euler discretization in time with the time interval~$\Delta \tau$
then yields the following iteration map:
\begin{subequations}
 \label{eq:W4map}
 \begin{align}
  \bm{x}_{n+1} &= \bm{x}_{n} +\Delta\tau X\bm{p}_{n},\\
  \bm{p}_{n+1} &= \left(1-2\Delta\tau\right)\bm{p}_{n} -\Delta\tau Y\bm{F}(\bm{x}_{n}).
 \end{align}
\end{subequations}

Again linearizing the above nonlinear map around
the solution~($\bm{x}=\bm{x}^{*}$ and $\bm{p}=\bm{0})$,
we obtain the following error propagation equations:
\begin{subequations}
 \label{eq:W4map_linear}
  \begin{align}
   \bm{e}^{(x)}_{n+1} =& \bm{e}^{(x)}_{n} - X \Delta\tau \bm{e}^{(p)}_{n},\\
   \bm{e}^{(p)}_{n+1} =& \left(1 -2\Delta\tau\right)\bm{e}^{(p)}_{n}
   +YJ \Delta \tau\bm{e}^{(x)}_{n},
  \end{align}
\end{subequations}
which can be cast into a more compact form as
\begin{eqnarray}
 \bm{e}^{(z)}_{n+1} = W \bm{e}^{(z)}_{n},\quad
 \bm{e}^{(z)}_{n} :=
 \begin{pmatrix}
  \bm{e}^{(x)}_{n}\\
  \bm{e}^{(p)}_{n}
 \end{pmatrix}
 ,\quad
 W :=
 \begin{bmatrix}
  I & -\Delta\tau X\\
  \Delta\tau Y J & (1-2\Delta\tau)I
 \end{bmatrix}
 ,\label{eq:W4_matrix}
\end{eqnarray}
where $I$ denotes the $N\times N$ identity matrix.
We name this map the W4 map.

\begin{lem}\label{lem:error}
 Suppose there exists a complete set of eigenvectors~$\bm{v}_i\in
 \mathbb{R}^{N}$ of the matrix~$W$ and 
 let $P$ be an $N\times N$ matrix composed of~$\bm{v}_i$ and $\lambda_i$
 be the eigenvalues corresponding to $\bm{v}_{i}$,
 then the error~$\bm{e}^{(z)}_n$ converges to the zero vector if the maximum eigenvalue satisfies $|\lambda_{max}|<1$.
\end{lem}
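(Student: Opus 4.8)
The plan is to diagonalize the iteration and read off the decay directly. Since we are told a complete set of eigenvectors $\bm{v}_i$ exists, the matrix $P$ whose columns are the $\bm{v}_i$ is invertible and $P^{-1}WP = D$ where $D = \mathrm{diag}(\lambda_1,\dots,\lambda_{2N})$ (note $W$ acts on $\mathbb{R}^{2N}$, so there are $2N$ eigenpairs; I would state this carefully). First I would introduce the transformed error $\bm{w}_n := P^{-1}\bm{e}^{(z)}_n$, so that the recursion $\bm{e}^{(z)}_{n+1} = W\bm{e}^{(z)}_n$ becomes the decoupled recursion $\bm{w}_{n+1} = D\bm{w}_n$, i.e.\ componentwise $(\bm{w}_{n+1})_i = \lambda_i (\bm{w}_n)_i$. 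Iterating gives $(\bm{w}_n)_i = \lambda_i^{\,n}(\bm{w}_0)_i$.

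Next I would take norms. Since $|\lambda_i| \le |\lambda_{max}| < 1$ for every $i$, each component satisfies $|(\bm{w}_n)_i| = |\lambda_i|^n |(\bm{w}_0)_i| \le |\lambda_{max}|^n |(\bm{w}_0)_i| \to 0$ as $n\to\infty$. Hence $\bm{w}_n \to \bm{0}$, and therefore $\bm{e}^{(z)}_n = P\bm{w}_n \to P\bm{0} = \bm{0}$, using continuity of the linear map $P$ (equivalently, $\|\bm{e}^{(z)}_n\| \le \|P\|\,\|\bm{w}_n\| \le \|P\|\,\|P^{-1}\|\,|\lambda_{max}|^n\,\|\bm{e}^{(z)}_0\|$, which also exhibits the linear (geometric) rate explicitly). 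That inequality is worth displaying because it connects back to the linear-convergence theme of Theorem~\ref{thm:NRmap_M}.

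One caveat I would be explicit about: $W$ may have complex eigenvalues (indeed Lemma~\ref{lem:decaymode} already produced complex characteristic frequencies), so $P$, $D$ and the $\bm{w}_n$ should be regarded as living over $\mathbb{C}$; the conclusion $\bm{e}^{(z)}_n\to\bm{0}$ is unaffected since the real vectors $\bm{e}^{(z)}_n$ are recovered as $P\bm{w}_n$ regardless. The only real hypothesis being used beyond $|\lambda_{max}|<1$ is diagonalizability, which is exactly what ``complete set of eigenvectors'' supplies; without it one would need a Jordan-form argument with polynomial-times-geometric bounds (as in the Proposition's explicit solution~\eqref{eq:sol_W4map1_linear}), but here that is unnecessary.

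I do not expect a genuine obstacle: the argument is the standard spectral-radius criterion for linear iterations, and every ingredient is handed to us. The only thing requiring a little care is bookkeeping the dimension ($2N$, not $N$, as the lemma's statement slightly misstates) and being clean about working over $\mathbb{C}$; I would flag both in one sentence rather than belabor them.
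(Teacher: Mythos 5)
Your proof is essentially the paper's own argument: diagonalize $W$ by the eigenvector matrix $P$, reduce the recursion to powers of the diagonal eigenvalue matrix, and conclude convergence from $|\lambda_{\max}|<1$. You are a bit more careful than the paper on two minor points that are genuinely worth noting — $W$ is $2N\times 2N$ so there are $2N$ eigenpairs (the lemma's statement and the paper's proof both write $N$), and the diagonalization must be allowed to run over $\mathbb{C}$; you also write the similarity the standard way ($W=PD P^{-1}$) whereas the paper has the factors reversed — but none of this changes the substance of the argument.
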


\begin{proof}
 The matrix~$W$ can be decomposed as $W=P^{-1}\Lambda P$ in terms of the
 matrix~$P:=\left[\bm{v}_{1}\ \bm{v}_{2}\ \cdots\ \bm{v}_{N}\right]$ and the
 diagonal matrix~$\Lambda:={\rm diag}\left[\lambda_1,\lambda_2,\cdots,\lambda_N\right]$.
 Then the error propagates as
 \begin{eqnarray}
  \bm{e}_{n+1} = P^{-1}\Lambda P\bm{e}_{n}
   = \left(P^{-1}\Lambda P\right)^{n+1}\bm{e}_{0}
   = P^{-1}\Lambda^{n+1} P\bm{e}_{0}
 \end{eqnarray}
 and converges to the zero vector if the maximum eigenvalue~$|\lambda_{max}|$ is less than unity. 
\end{proof}

Our problem is then reduced to finding the eigenvalues of the
matrix~$W$ in Eq.~\eqref{eq:W4_matrix}.

\begin{lem}\label{lem:blockmatrix}
 Suppose $A,B,C,D$ are $N\times N$ matrices
 and $M:= \begin{bmatrix} A & B\\ C & D \end{bmatrix}$ is a $2N\times 2N$ matrix.
 If $A$ and $B$ commute with each other and $A$ is invertible,
 then the determinant of the matrix~$M$ is given as
 \begin{eqnarray}
  \det \left[M\right]
   = \det\left[AD -CB\right].
   \quad
 \end{eqnarray}
\end{lem}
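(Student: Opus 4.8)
The plan is to use the invertibility of $A$ to block-triangularise $M$ via the Schur complement of $A$, reducing the $2N\times 2N$ determinant to an $N\times N$ one, and then to invoke the commutativity hypothesis to put that $N\times N$ determinant into the stated form.

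First I would record the block factorisation
\begin{equation}
 M=\begin{bmatrix} A & B\\ C & D\end{bmatrix}
  =\begin{bmatrix} A & 0\\ C & I\end{bmatrix}
   \begin{bmatrix} I & A^{-1}B\\ 0 & D-CA^{-1}B\end{bmatrix},
\end{equation}
which is verified by multiplying out the right-hand side and which is available precisely because $A^{-1}$ exists. Both factors on the right are block-triangular, so the determinant of each is the product of the determinants of its diagonal blocks. This is the only textbook ingredient; it follows, for instance, from a Laplace expansion along the first $N$ columns, or by splitting each factor further into block-elementary matrices of determinant $1$, $\det A$, or $\det\!\left(D-CA^{-1}B\right)$ and using multiplicativity of the determinant. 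Combining these gives
\begin{equation}
 \det M=\det A\cdot\det\!\left(D-CA^{-1}B\right).
\end{equation}

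It then remains to absorb the factor $\det A$ into the $N\times N$ determinant. By multiplicativity of the determinant,
\begin{equation}
 \det M=\det\!\left(A\left(D-CA^{-1}B\right)\right)
        =\det\!\left(AD-ACA^{-1}B\right),
\end{equation}
and here the commutativity hypothesis enters: it gives $ACA^{-1}=C$, hence $ACA^{-1}B=CB$, and therefore $\det M=\det\!\left(AD-CB\right)$, as claimed.

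The step I expect to require the most care is this last one. One has to keep track of which pair of blocks commutes: re-absorbing $\det A$ on the left of the Schur complement produces $AD-ACA^{-1}B$, whereas re-absorbing it on the right produces $DA-CA^{-1}BA$, and one must pair the chosen re-absorption with the commutation relation that collapses the conjugated middle factor to $CB$. Everything else is routine — the block factorisation is a one-line check, and the block-triangular determinant identity is standard. It is also worth noting that invertibility of $A$ is the only nondegeneracy used, so no limiting or density argument (of the kind one would need if instead only a different block were assumed invertible) is required, and the identity holds with no further restriction on $B$, $C$, or $D$ beyond the stated commutativity.
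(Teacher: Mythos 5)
Your overall strategy---block-triangularising $M$ via the Schur complement of $A$ and then absorbing $\det A$ into the $N\times N$ determinant---is essentially the paper's own argument in another guise (the paper multiplies $M$ on the right by $\left[\begin{smallmatrix} I & -B\\ O & A\end{smallmatrix}\right]$, which performs the same elimination). However, the last step of your proof is wrong: from the hypothesis $AB=BA$ you conclude $ACA^{-1}=C$, but the hypothesis says nothing about $C$ commuting with $A$, so this does not follow. You correctly identified this as the step requiring care and then paired the wrong re-absorption with the wrong commutation relation. Since the commutativity you are given involves $B$, you must absorb $\det A$ on the \emph{right} of the Schur complement:
\begin{equation*}
 \det M=\det\!\left(D-CA^{-1}B\right)\det A
       =\det\!\left(DA-CA^{-1}BA\right)
       =\det\!\left(DA-CB\right),
\end{equation*}
using $A^{-1}BA=A^{-1}AB=B$.

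Note that the corrected computation yields $\det(DA-CB)$, not $\det(AD-CB)$, and these genuinely differ under the stated hypotheses: take $A=\mathrm{diag}(1,2)$, $B=I$, $C=\left[\begin{smallmatrix}0&1\\0&0\end{smallmatrix}\right]$, $D=\left[\begin{smallmatrix}0&1\\1&0\end{smallmatrix}\right]$; then $\det M=\det(DA-CB)=-1$ while $\det(AD-CB)=0$. So the formula displayed in the lemma is itself a typo---the paper's own proof likewise terminates at $\det[DA-CB]$---and any derivation that lands on $\det(AD-CB)$ from these hypotheses must contain an error, as yours does. (The typo is immaterial where the lemma is applied in the paper, since there the diagonal blocks are scalar multiples of the identity, but your proof should end at $\det(DA-CB)$.)
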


\begin{proof}
 Following Ref.~\cite{silvester2000},
 we consider the identity
 \begin{eqnarray}
   \begin{bmatrix}
    A & B\\
    C & D
   \end{bmatrix}
   \begin{bmatrix}
    I & -B\\
    O & A
   \end{bmatrix}
   =
   \begin{bmatrix}
    A & BA -AB\\
    C & DA -CB
   \end{bmatrix},\label{eq:identity_block}
 \end{eqnarray}
 where $O$ denotes the $N\times N$ zero matrix.
 Since $A$ and $B$ commute with each other,
 the upper-right block vanishes.
 The determinant of the identity~\eqref{eq:identity_block}
 yields
 \begin{eqnarray}
  \det \left[M\right] \det \left[A\right] = \det \left[A\right] \det\left[DA-CB\right].
 \end{eqnarray}
 Since $\det\left[A\right]\neq 0$,
 we have 
 \begin{eqnarray}
  \det \left[M\right] = \det\left[DA-CB\right].
 \end{eqnarray}
\end{proof}

\begin{prop}
 Suppose there exists a solution of Eq.~\eqref{eq:nonlinearEQs}
 and let $\bm{F}:\mathbb{R}^{N}\rightarrow \mathbb{R}^{N}$ be Lipschitz
 continuous near the solution~$\bm{x}^{*}$ and
 $X$ and $Y$ be $N\times N$ real nonsingular matrices,
 then the iteration map~\eqref{eq:W4map} with~$Y=X^{-1}J^{-1}$
 and $0<\Delta\tau<1$ yields a series of vectors~$\bm{x}_n$ that converges to the
 solution~$\bm{x}^{*}$
 from an arbitrary initial guess~$\bm{x}_0$ sufficiently close to the solution.
\end{prop}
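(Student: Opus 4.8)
\emph{Proof strategy.} The plan is to show that the error matrix $W$ of Eq.~\eqref{eq:W4_matrix}, which governs the linearized iteration~\eqref{eq:W4map_linear}, has spectral radius strictly less than one, and then to upgrade this to convergence of the genuine nonlinear iteration in a neighbourhood of the solution.

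First I would insert the choice $Y=X^{-1}J^{-1}$ (which presupposes, and forces, $J=J(\bm{x}^{*})$ to be nonsingular) into $W$. Since then $YJ=X^{-1}$, the error matrix becomes
\begin{eqnarray}
 W = \begin{bmatrix} I & -\Delta\tau X\\ \Delta\tau X^{-1} & \left(1-2\Delta\tau\right)I \end{bmatrix}.
\end{eqnarray}
Next I would compute its characteristic polynomial $\det\left[\lambda I-W\right]$, where $I$ here is the $2N\times2N$ identity. In block form the top-left block of $\lambda I-W$ is $(\lambda-1)I_{N}$, a scalar multiple of $I_{N}$, hence it commutes with the top-right block and is invertible for every $\lambda\neq1$; Lemma~\ref{lem:blockmatrix} then applies and, since both sides are polynomial identities in $\lambda$, extends to all $\lambda$, giving
\begin{eqnarray}
 \det\left[\lambda I-W\right]
 &=& \det\left[\left\{(\lambda-1)(\lambda-1+2\Delta\tau)+\Delta\tau^{2}\right\}I_{N}\right]\nonumber\\
 &=& \left(\lambda-1+\Delta\tau\right)^{2N},
\end{eqnarray}
because $(\lambda-1)(\lambda-1+2\Delta\tau)+\Delta\tau^{2}=(\lambda-1+\Delta\tau)^{2}$ is a perfect square. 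Thus $W$ has the single eigenvalue $\lambda=1-\Delta\tau$ with algebraic multiplicity $2N$, and $|\lambda|=1-\Delta\tau<1$ exactly when $0<\Delta\tau<1$.

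The delicate point — and the one where Lemma~\ref{lem:error} cannot be used verbatim — is that $W$ is \emph{not} diagonalizable: a matrix with a single eigenvalue is diagonalizable only if it is a scalar multiple of the identity, which $W$ is not as long as $\Delta\tau X\neq O$. I would therefore replace the eigen-decomposition step either (a) by the Jordan decomposition $W=P^{-1}\Lambda P$ with $\Lambda$ block-diagonal in Jordan blocks for $1-\Delta\tau$, noting that every entry of $\Lambda^{n}$ is of the form $\binom{n}{k}(1-\Delta\tau)^{\,n-k}\to0$ for $0<\Delta\tau<1$, so that $W^{n}\to O$ and $\bm{e}^{(z)}_{n}=W^{n}\bm{e}^{(z)}_{0}\to\bm{0}$; or (b), more explicitly, by setting $\bm{f}^{(x)}_{n}:=X^{-1}\bm{e}^{(x)}_{n}$, under which the recursion~\eqref{eq:W4map_linear} decouples into $N$ independent copies of the one-dimensional system~\eqref{eq:W4map1_linear}, so the closed-form solution~\eqref{eq:sol_W4map1_linear} applies componentwise and already displays the $(1-\Delta\tau)^{n}$ decay modulated by the polynomial factor that reflects the Jordan block. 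Either way $\bm{e}^{(x)}_{n}=X\bm{f}^{(x)}_{n}\to\bm{0}$, i.e. $\bm{x}_{n}\to\bm{x}^{*}$.

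Finally I would spell out the bridge from the linear analysis to the nonlinear iteration that the earlier proofs treat lightly: writing $\bm{F}(\bm{x}_{n})$ in terms of $J$ and $\bm{e}^{(x)}_{n}$ as in the linearizations above but retaining a remainder $\bm{R}_{n}$ with $\|\bm{R}_{n}\|=\mathcal{O}(\|\bm{e}^{(x)}_{n}\|^{2})$ — which is precisely where the Lipschitz hypothesis on $\bm{F}$ near $\bm{x}^{*}$ enters — one sees that the W4 map has $(\bm{x}^{*},\bm{0})$ as a fixed point whose Jacobian is exactly $W$. Since $\rho(W)=1-\Delta\tau<1$, the standard local-convergence theorem for fixed-point iterations (Ostrowski) then provides a neighbourhood of $(\bm{x}^{*},\bm{0})$ from which the iterates converge, and because $\rho(W)<1$ this holds for any $\bm{p}_{0}$, in particular $\bm{p}_{0}=\bm{0}$. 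I expect the spectral computation via Lemma~\ref{lem:blockmatrix} to be routine; the real obstacle is the non-diagonalizability of $W$, which is why the spectral-radius argument (or the explicit reduction to $N$ copies of the already-solved one-dimensional recursion), rather than Lemma~\ref{lem:error}, must carry the proof.
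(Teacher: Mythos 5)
Your spectral computation is identical to the paper's: both apply Lemma~\ref{lem:blockmatrix} to $W-\lambda I$, use $Y=X^{-1}J^{-1}$ so that $YJX=I$, and reduce the characteristic polynomial to $(1-\Delta\tau-\lambda)^{2N}$, giving the single eigenvalue $1-\Delta\tau$. Where you depart from the paper is the concluding step, and there you have found a genuine gap in the paper's own argument rather than introduced one of your own. The paper invokes Lemma~\ref{lem:error}, whose hypothesis is that $W$ admit a complete set of eigenvectors; but a matrix with a single eigenvalue of algebraic multiplicity $2N$ is diagonalizable only if it equals $(1-\Delta\tau)I$, whereas
\begin{equation*}
 W-(1-\Delta\tau)I=\Delta\tau\begin{bmatrix}I & -X\\ X^{-1} & -I\end{bmatrix}
\end{equation*}
has rank $N$ (the lower block row is $X^{-1}$ times the upper), so the geometric multiplicity is only $N$ and $W$ is defective for every $\Delta\tau\neq0$. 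Lemma~\ref{lem:error} therefore does not apply as cited, and your replacement step is necessary, not optional. Both of your repairs are valid: the spectral-radius argument $\rho(W)<1\Rightarrow W^{n}\to O$ is insensitive to Jordan structure, while the substitution $\bm{f}^{(x)}_{n}:=X^{-1}\bm{e}^{(x)}_{n}$ is arguably the cleaner route, since it decouples the $2N$-dimensional linear recursion into $N$ copies of the scalar system~\eqref{eq:W4map1_linear} whose closed-form solution~\eqref{eq:sol_W4map1_linear} --- a geometric factor times a polynomial in $n$ --- already exhibits precisely the defective-eigenvalue decay. Your final remark about bridging from the linearized recursion to the nonlinear map via Ostrowski's theorem is also well placed; the paper leaves that step implicit, and the ``Lipschitz continuous near $\bm{x}^{*}$'' hypothesis should be read as shorthand for the Jacobian regularity that the Taylor-remainder estimate actually requires.
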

\begin{proof}
 Let $\lambda_W$ be an eigenvalue of the matrix~$W$.
 Then the following equation is satisfied:
 \begin{eqnarray}
  \det\left[W -\lambda_WI\right] = \det\left[(1-\lambda_W)(1-2\Delta\tau -\lambda_W)I
       +\Delta\tau^2 Y J X \right]=0, \label{eq:eigenW4map0}
 \end{eqnarray}
 where we use Lemma~\ref{lem:blockmatrix}.
 Under the given condition~$Y=X^{-1}J^{-1}$, Eq.~\eqref{eq:eigenW4map0} is reduced to
 \begin{eqnarray}
  \det\left[W -\lambda_WI\right]
   = \det\left[\left( 1 -\Delta\tau -\lambda_W \right)^2 I \right]=0,
 \end{eqnarray}
 leading to $\lambda_W=1-\Delta\tau$.
 The iteration~\eqref{eq:W4map} hence converges to the
 solution~$\bm{x}=\bm{x}^{*}$ and $\bm{p}=0$
 as long as $0<\Delta\tau <1$,
 according to Lemma~\ref{lem:error}.
\end{proof}

The proposition implies the local convergence
 of our formulation. Note that
Eqs.~\eqref{eq:W4equation} are combined into the following second-order differential equations:
\begin{eqnarray}
 \ddot{\bm{x}} +2\dot{\bm{x}} -\dot{X}X^{-1}\dot{\bm{x}} +XY\bm{F}(\bm{x}) =0.
\end{eqnarray}
It is apparent then that 
the third term, which may be regarded as a part of the friction term,
does not vanish unless the matrix~$X$ is constant.
It means that the ``damping factor'' is in general time-dependent,
a key property for the local convergence in our formulation and
completely different from the NR or DN method.

\subsection{UDL decomposition}\label{ssec:UDL}
Next we discuss the choice of the preconditioners~$X$~and~$Y$. 
Although we have several options of the matrices~$X$ and $Y$, we propose
here one of the most useful decompositions.

\begin{defn}
 Suppose the $N\times N$ Jacobian matrix is decomposed into $J:=UDL$
 where $U, D$ and $L$ are $N\times N$ upper triangular, diagonal and lower triangular matrices, respectively.
 In our formulation they are employed to give $X=L^{-1}$ and $Y= D^{-1}U^{-1}$.
\end{defn}

Eqs.~\eqref{eq:W4equation} then become the following:
\begin{subequations}
 \label{eq:W4UL}
  \begin{align}
   \bm{x}_{n+1} =& \bm{x}_{n} + \Delta\tau L^{-1}_n \bm{p}_{n},\\
   \bm{p}_{n+1} =& \left(1 -2\Delta\tau\right)\bm{p}_{n}
   -\Delta \tau D^{-1}_nU^{-1}_n \bm{F}(\bm{x}_{n}),
  \end{align}
\end{subequations}
where $L^{-1}_n, D^{-1}_n$
and $U^{-1}_n$ are the inverted matrices evaluated at $\bm{x}_n$: e.g.,
$L^{-1}_n:= L^{-1}(\bm{x}_{n})$.
As mentioned in Sec.~\ref{ssec:W4}, the eigenvalue in the close vicinity
of the solution is $1-\Delta\tau$ to the first order.

\section{Results}\label{sec:result}
We show here the results of some applications of our new method,
reffered to as the W4 method, to some sample problems and comparisons of
its performance with those of the NR and DN methods.

\subsection{Example 1}\label{ssec:ex1}
We will return to the very first problem for a single variable given in Eq.~\eqref{eq:simple1d}:
\begin{eqnarray*}
 f(x) = \arctan (x) +\sin (x) -1 = 0.
\end{eqnarray*}
\begin{table}[t]
 \begin{center}
  \begin{tabular}{c||c|c|c|c|c|c|c|c|c|c|c|c|c|}
   \diagbox{Method}{$x_i$} & -3.0& -2.5& -2.0& -1.5& -1.0& -0.5& 0& 0.5& 1.0& 1.5& 2.0& 2.5& 3.0\\\hline
   NR & $\infty$& $\infty$& $\infty$& 4& 5& 4& 3& 2& 4& 8& 4& 4& 3\\
   DN & 25& $\infty$& 41& 20& 19& 20& 19& 15& 18& 19& 17& 19& 18\\\hline
   W4 & 1434& 33& 70& 22& 25& 26& 25& 20& 22& 28& 30& 25& 24\\
  \end{tabular}
 \end{center}
 \caption{The same as Table~\ref{tab:comp_ex0_simple1d}, but the
 performance of the W4 method is included.
 The convergence criterion is the same: $|f(x)|<10^{-6}$.
 The time interval is set as $\Delta\tau=0.5$ for the W4 method.
 The symbol~$\infty$ implies again that no solution is obtained within $10^4$ iteration steps.
 }
 \label{tab:W4_ex0_simple1d}
\end{table}
The iteration map for this problem in our W4 method is given as
\begin{subequations}
 \label{eq:W4map_simple1d}
  \begin{align}
   x_{n+1} &= x_{n} + \Delta\tau p_{n},\\
   p_{n+1} &= \left(1-2\Delta\tau\right)p_{n} - \Delta\tau \frac{\arctan(x_n)+\sin(x_n)-1}{\frac{1}{1+x_n^2}+\cos(x_n)},
  \end{align}
\end{subequations}
where $0 < \Delta\tau < 1 $.
In Table~\ref{tab:W4_ex0_simple1d}, we summarize the performance of the
W4 method together with those for the NR and DN methods.
Although the numbers of iteration steps are larger than those
of the DN method,
the W4 method successfully finds the solution where the other two methods fail.
The inertia term or the second derivative term,
which is absent in the NR or DN method, is supposed to be responsible for
the ``slower'' convergence in our method.
The case with $x_i=-2.5$ is an exceptional case, in which this effect is
particularly remarkable.
It is somewhat expected because Theorem~\ref{thm:bendixson} guarantees
that there does not exist any permanent oscillation
in the continuous time-evolution equation.

\subsection{Example 2}\label{ssec:ex2}
Next, we consider the multi-variable problem given in Eqs.~\eqref{eq:simple2d}
\begin{subequations}
 \begin{align*}
  F_1(x,y) &= x^2 +y^2 -4 = 0,\\
  F_2(x,y) &= x^2y -1 = 0.
 \end{align*}
\end{subequations}
The W4 method with the UDL decomposition discussed in Sec.~\ref{ssec:UDL} gives
the following iteration map for this problem:
\begin{subequations}
 \label{eq:FProblem_W4UL}
 \begin{align}
  x_{n+1} &= x_{n} +\Delta\tau p_{n},\\
  y_{n+1} &= y_{n} +\Delta\tau \left[ -\frac{2y_{n}}{x_{n}}p_{n} +q_{n}\right],\\
  p_{n+1} &= \left(1-2\Delta\tau\right)p_{n} -\Delta\tau \left[ \frac{x_{n}}{2\left(x_{n}^2-2y_{n}^2\right)}F_1 -\frac{y_{n}}{x_{n}\left(x_{n}^2-2y_{n}^2\right)}F_2\right],\\
  q_{n+1} &= \left(1-2\Delta\tau\right)q_{n} -\frac{\Delta\tau}{x_{n}^2}F_2,
 \end{align}
\end{subequations}
in which $\bm{x}=\left(x\ y\right)^T$ and $\bm{p}=\left(p\ q\right)^T$.

\begin{figure}[t]
 \begin{tabular}{cc}
  \includegraphics[width=7.cm,clip]{./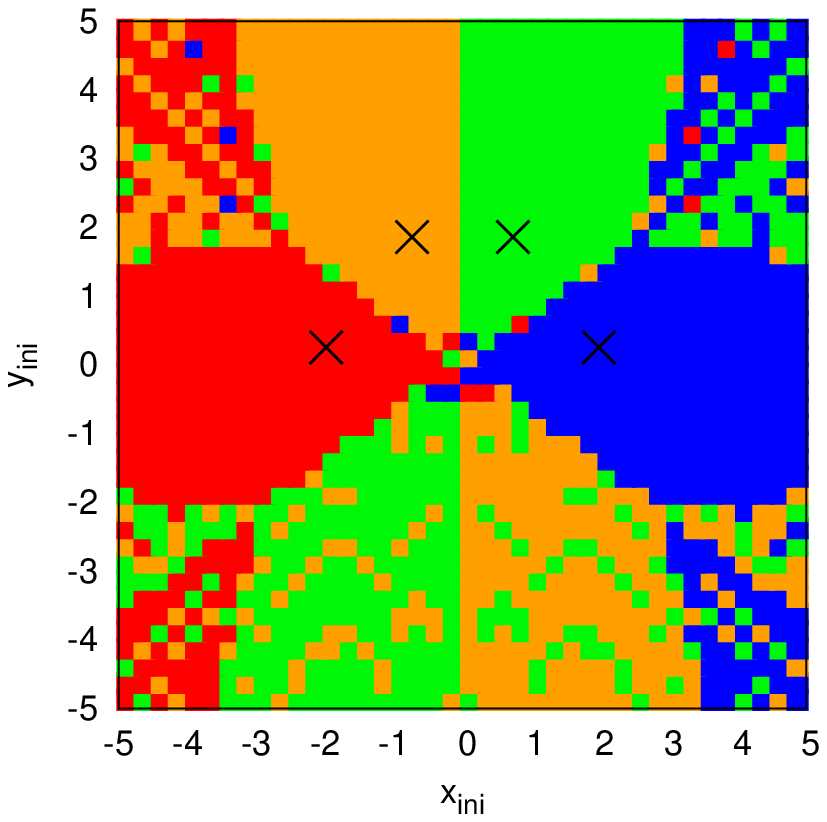}&
  \includegraphics[width=8.8cm,clip]{./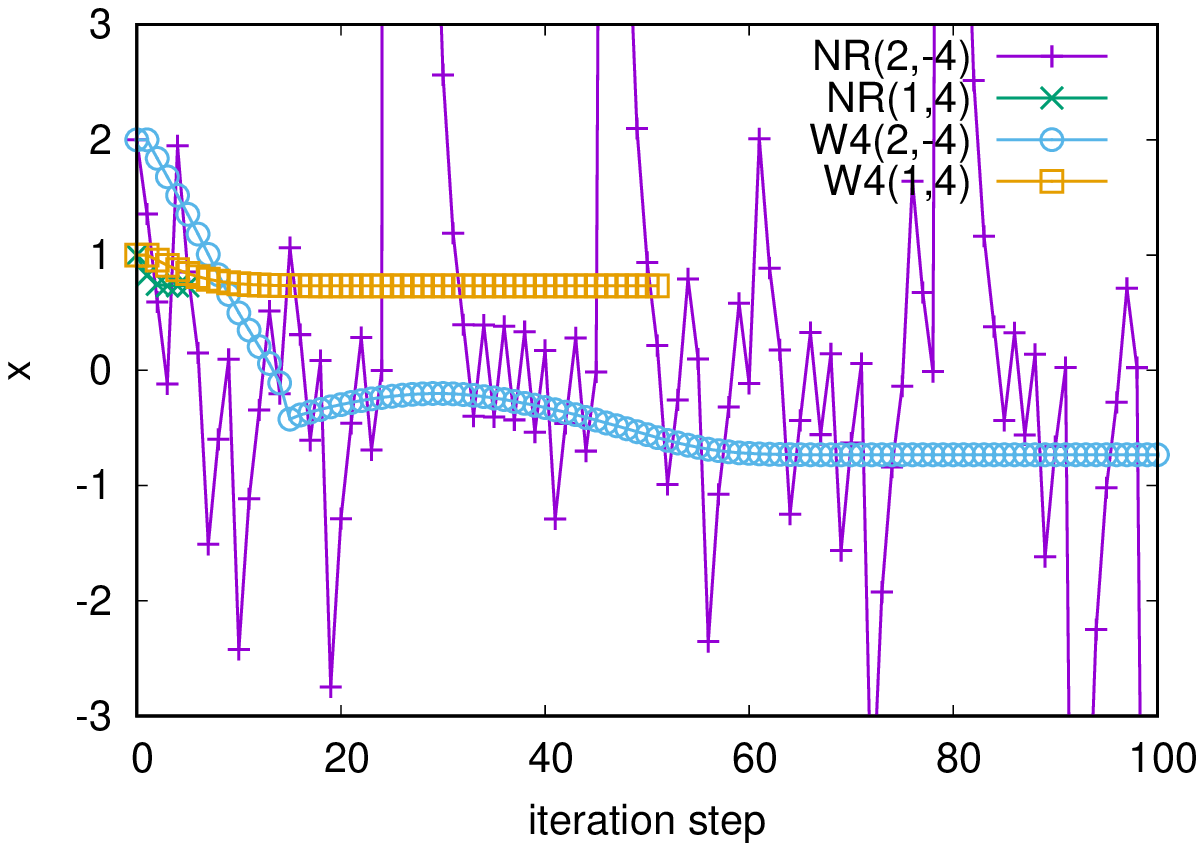}
 \end{tabular}
 \caption{(Left) the Newton basin for the
 W4 method with $X=L^{-1}, Y=D^{-1}U^{-1}$ and $\Delta\tau=0.5$.
 The red, orange, green and blue colors correspond to the initial
 conditions to reach one of the four solutions of Eq.~\eqref{eq:simple2d}
 indicated with the black crosses as in Fig.~\ref{fig:ex0_simple2d_NR}.
 (Right)
 the sequences produced by the itetaion maps of the NR and W4 methods
 for the two initial conditions:
 $(x_0,y_0)=(1,4)$ and $(2,-4)$.
 }
 \label{fig:FProblem_W4}
\end{figure}

In the left panel of Fig.~\ref{fig:FProblem_W4}, we display the
initial-guess dependence of the
W4 method with $X=L^{-1}, Y=D^{-1}U^{-1}$ and $\Delta\tau=0.5$.
Four colors~(red, orange, green, blue) correspond to the four different solutions
just as in~Fig.~\ref{fig:ex0_simple2d_NR}.
We first note that the local convergence is observed also for the W4 method,
 more or less in
a similar way to the NR method as expected.
What is most remarkable, however, is the fact that
the black region in~Fig.~\ref{fig:ex0_simple2d_NR}, where the NR method fails to find a solution within $1000$ iteration steps,
disappears completely in our new method within the initial value space
that we have explored.
In the right panel of Fig.~\ref{fig:FProblem_W4},
we compare the sequences that the iteration maps of the two methods
produce for
the same initial conditions, $(x,y)=(1,4)$ or $(2,-4)$.
It is apparent that the
NR method immediately finds the solution
for the former whereas it fails to obtain a solution for the latter.
In fact the sequence is oscillatory for the latter case and there is no
hint of convergence at least up to $1000$ iterations.
The W4 method, on the other hand,
always manages to find a solution
although it takes more iterations to reach it,
compared with the NR method when it does find a solution.

\subsection{Example 3}\label{ssec:ex3}
The last problem is new and given by the following equations:
\begin{subequations}
 \label{eq:OProblem}
 \begin{align} 
  F_1(x,y) &= x^2 -y^2 -4x +6 = 0,\\
  F_2(x,y) &= 2xy +4y -2 = 0,
 \end{align}
\end{subequations}
which have actually two solutions at
\begin{align}
 \label{eq:OProblem_Solution}
 (x^{*},y^{*})\sim
 ( -1.7505169,  4.0082886),\quad
 ( -2.2244718, -4.4549031).
\end{align}
The Jacobian for this problem is given as
\begin{eqnarray}
 J = 
 \begin{bmatrix}
  2(x-2) & -2y\\
  2y & 2(x+2)
 \end{bmatrix}.
 \label{eq:OProblem_Jacobian}
\end{eqnarray}
\begin{figure}[t]
 \begin{tabular}{cc}
  \includegraphics[width=8.cm,clip]{./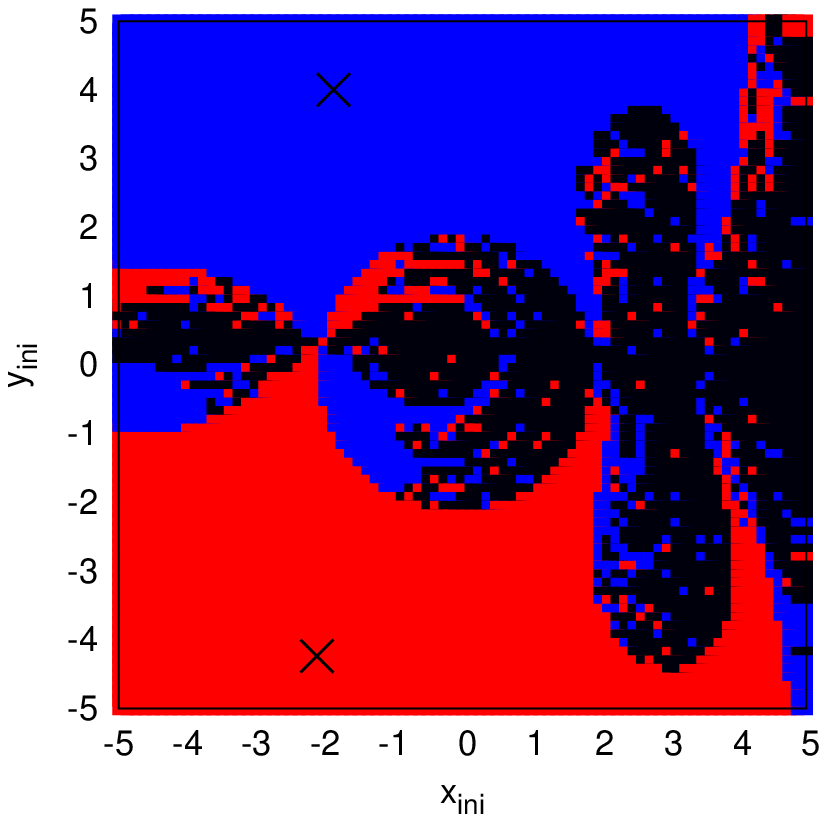}&
  \includegraphics[width=8.cm,clip]{./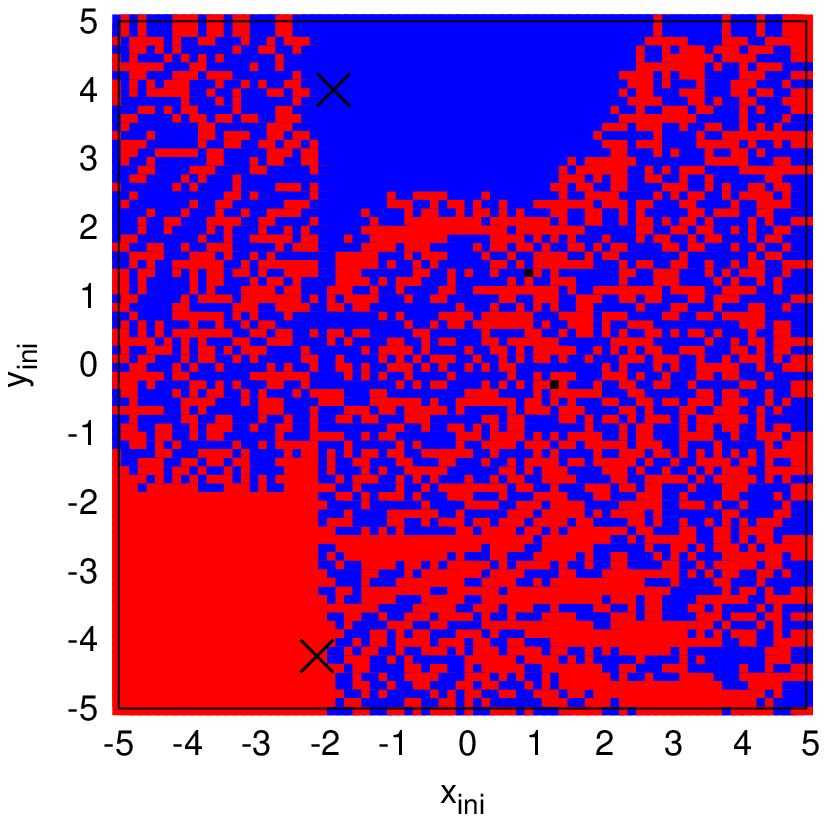}
 \end{tabular}
 \caption{The Newton basins for the NR method~(left) and
 the W4 method with $X=D^{-1}L^{-1}, Y=U^{-1}$ and $\Delta\tau=0.5$~(right).
 The red and blue colors correspond to the initial conditions that
 obtain one of the two solutions of Eq.~\eqref{eq:OProblem} indicated
 with the black crosses,
 while the black color means that no solution is obtained within $1000$ iteration steps.
 }
 \label{fig:OProblem}
\end{figure}

(i) The iteration maps for the NR or DN method are written as
\begin{subequations}
 \label{eq:OProblem_NR}
 \begin{align}
  x_{n+1} &= x_{n} -\frac{\Delta\tau}{2\left(x_{n}^2+y_{n}^2-4\right)}
  \Bigl[\left(x_{n}+2\right)F_1 +y_{n}F_2\Bigr], \\
  y_{n+1} &= y_{n} -\frac{\Delta\tau}{2\left(x_{n}^2+y_{n}^2-4\right)}
  \Bigl[-y_{n}F_1 +\left(x_{n}-2\right)F_2\Bigr],
 \end{align}
\end{subequations}
where $x$ and $y$ are defined as $\bm{x}=\left(x\ y\right)^T$.

(ii) In the W4 method with the UDL decomposition, we solve the following
iteration maps for
$\bm{x}=\left(x\ y\right)^T$ and $\bm{p}=\left(p\ q\right)^T$:
\begin{subequations}
 \label{eq:OProblem_W4}
 \begin{align}
  x_{n+1} &= x_{n} +\Delta\tau p_{n},\\
  y_{n+1} &= y_{n} +\Delta\tau \left[ \frac{y_{n}}{x_{n}+2}p_{n} +q_{n}\right],\\
  p_{n+1} &= \left(1-2\Delta\tau\right)p_{n}
  -\frac{\Delta\tau\left(x_{n}+2\right)}{2\left(x_{n}^2+y_{n}^2-4\right)}
  \left[ F_1 +\frac{y_{n}}{x_{n}+2}F_2\right],\\
  q_{n+1} &= \left(1-2\Delta\tau\right)q_{n} -\frac{\Delta\tau}{2\left(x_{n}+2\right)}F_2.
 \end{align}
\end{subequations}

In Fig.~\ref{fig:OProblem}, we show the Newton basins for the iteration
map~(i) of the NR method~(left) and for the iteration map~(ii) of the
W4 method with $X=L^{-1}, Y=D^{-1}U^{-1}$ and $\Delta\tau=0.5$~(right).
The red and blue colors correspond to the initial conditions that
converge to one of the solutions indicated with the black crosses while
the black dots again imply that no solution is reached within $1000$ iteration steps.
It is found that for the NR method 
the blue or red region is clustered near the solution,
to which it converges and whereas two colors appear rather randomly in
the region between the two solutions.
It is also evident that the NR method sometimes fails to find a solution
whereas the W4 method always reaches one of the solutions.
Which one is obtained is unpredictable.

\subsection{Global convergence}\label{sec:discussion}
%
In this section, we elucidate the difference between the NR method and
the W4 method, using the following
two-variable problem, which is somewhat similar to Example~2 in Sec.~\ref{ssec:ex2},
\begin{subequations}
 \label{eq:FProblem0}
 \begin{align}
  F_1(x,y) &= x^2 +xy^2 -4 = 0,\\
  F_2(x,y) &= x^2y -1 = 0,
 \end{align}
\end{subequations}
which has three solutions at
\begin{align}
 \label{eq:FProblem0_Solution}
 (x^{*},y^{*})\sim
 (-2.0296789, 0.24274223),\quad
 ( 1.9668697, 0.25849302),\quad
 ( 0.65417501, 2.3367492).
\end{align}
The Jacobian matrix for this system is
given as 
\begin{eqnarray}
 J = 
 \begin{bmatrix}
  2x +y^2 & 2xy\\
  2xy & x^2
 \end{bmatrix}.\label{eq:FProblem0_Jacobian}
\end{eqnarray}
Note that this is a $2\times 2$ symmetric matrix and
has two real eigenvalues~$\lambda_{\pm}$:
\begin{eqnarray}
 \lambda^{\pm} &=& \frac{1}{2}
  \left( A \pm \sqrt{A^2 -4B}\right),\label{eq:FProblem0_eigenvalue}
\end{eqnarray}
where we define $A:= {\rm tr}J$ and $B:= \det J$.
Then the corresponding eigenvectors are given as
\begin{eqnarray}
 \bm{v}^{\pm} = \left[ \frac{2xy}{a^{\pm}}\ \frac{\lambda_{\pm}-2x-y^2}{a^{\pm}}\right]^{T},\quad
 a^{\pm} := \sqrt{4x^2y^2 +\left(\lambda^{\pm} -2x-y^2 \right)^2}.
 \label{eq:FProblem0_eigenvector}
\end{eqnarray}
\begin{figure}[t]
 \begin{center}
  \begin{tabular}{cc}
   \includegraphics[width=8.4cm,clip]{./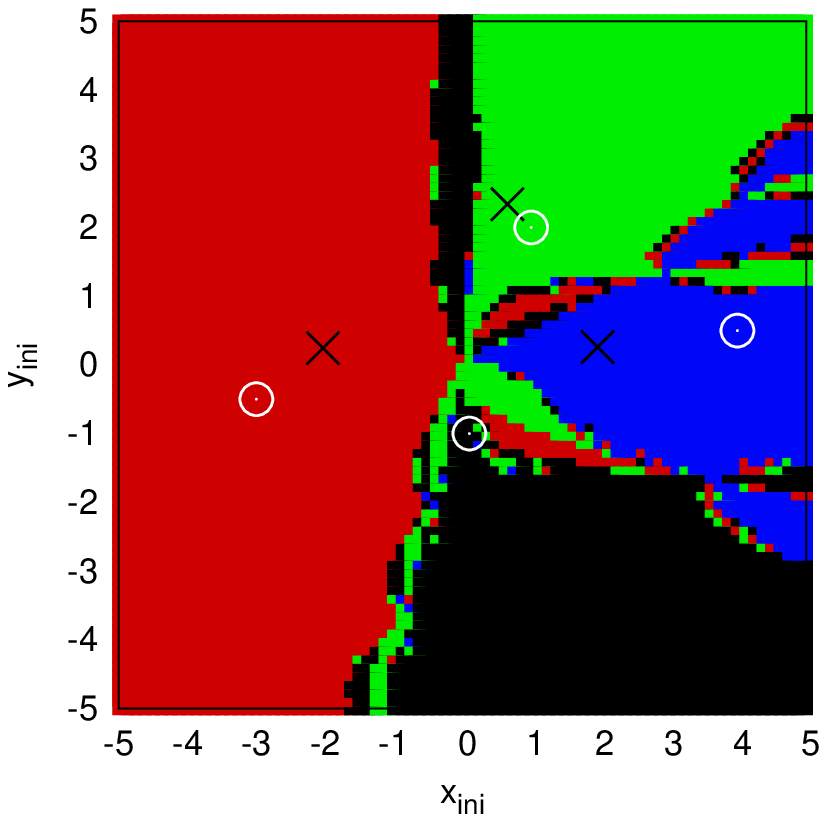} &
   \includegraphics[width=8.4cm,clip]{./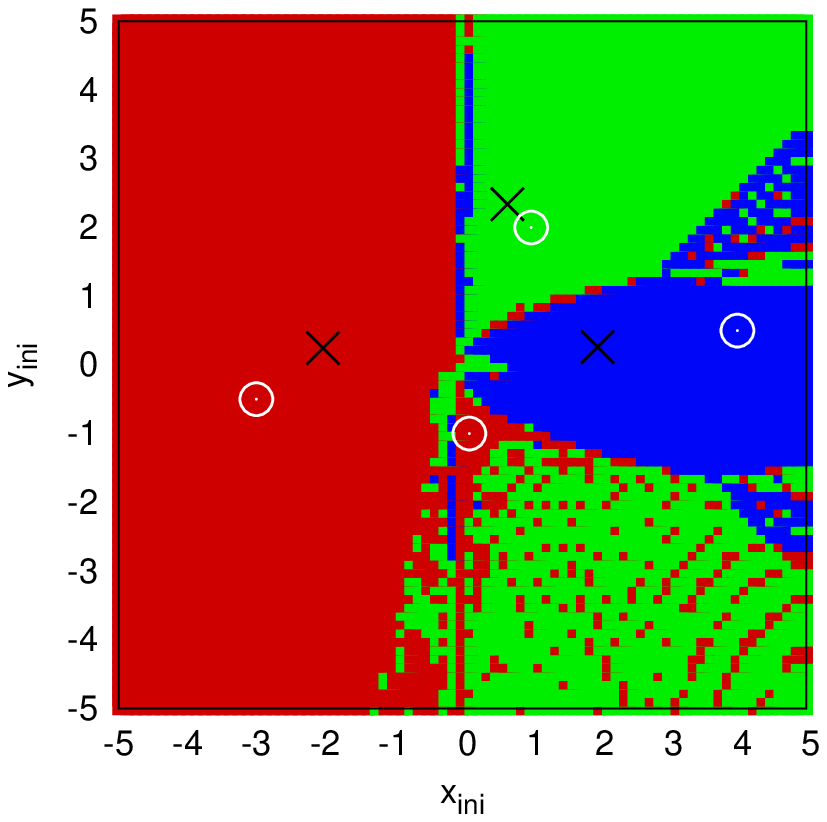}\\
   \includegraphics[width=7.8cm,clip]{./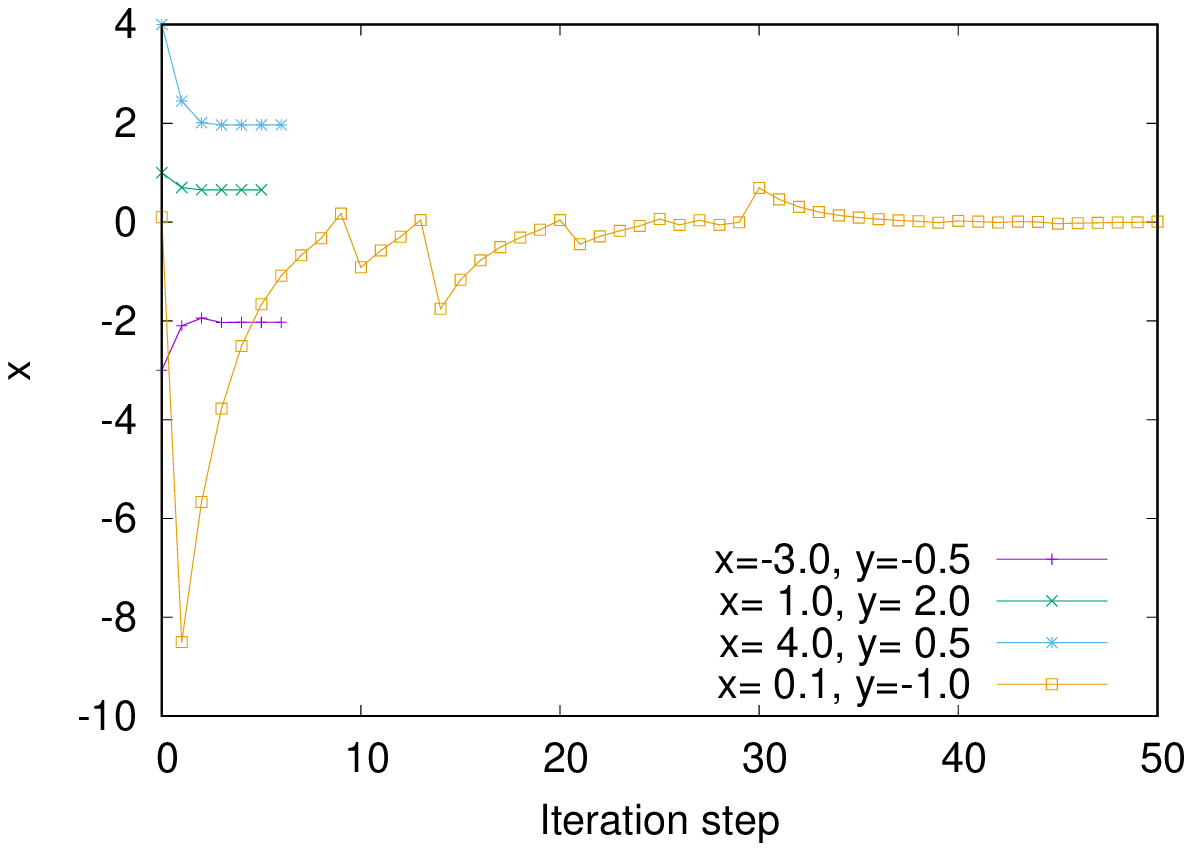} &
   \includegraphics[width=7.8cm,clip]{./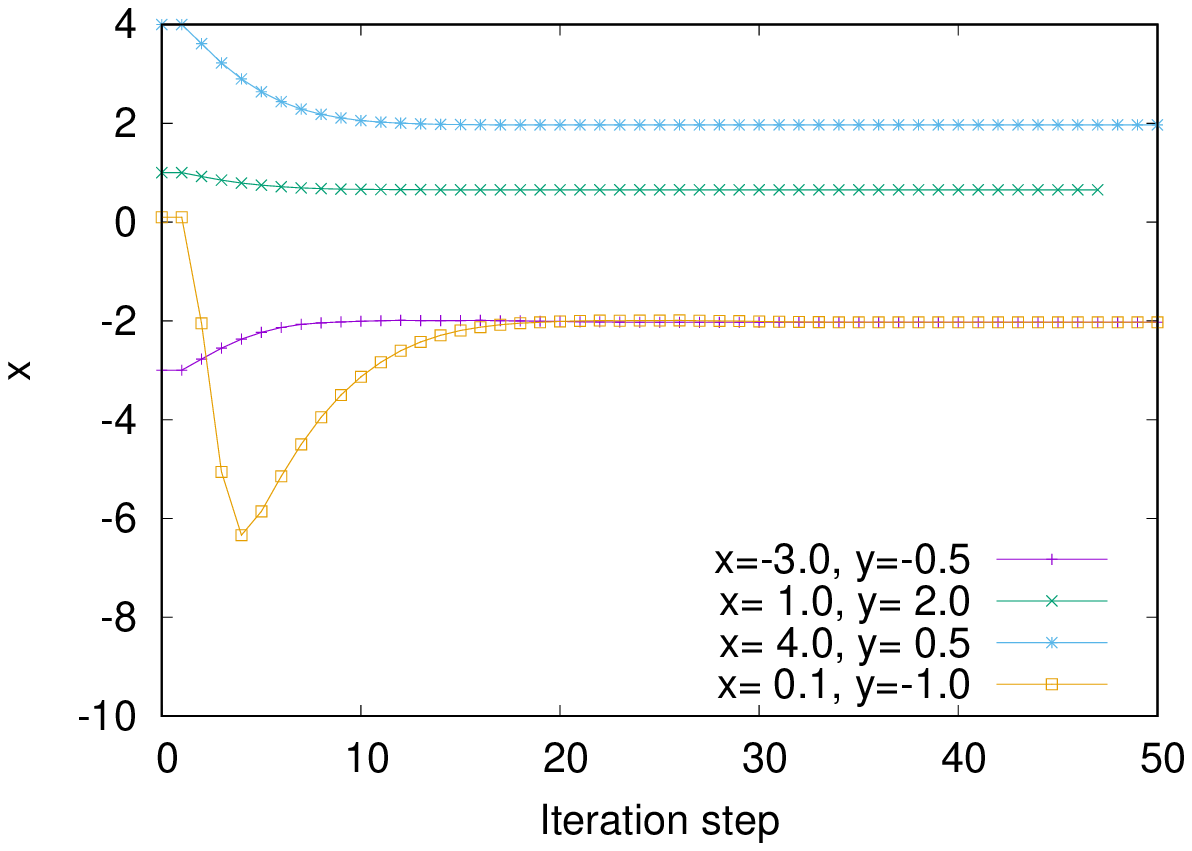}
  \end{tabular}
 \end{center}
 \caption{Initial guess dependence by NR method(Upper-Left)
 and W4 method with $X=P, Y=\Lambda^{-1} P^{-1}$ and  $\Delta\tau=0.5$(Upper-Right).
 The red, green and blue correspond to all solutions of Eq.~\eqref{eq:FProblem0},
 while the black means the method cannot find any solution before $1000$ iterations.
 Each solution is described by the black crosses.
 The white circles are initial guesses for the lower figures showing its
 evolutions by NR method(Lower-Left) and our new method(Lower-Right).
 }
 \label{fig:FProblem0}
\end{figure}
\begin{figure}[t]
 \begin{tabular}{cc}
  \includegraphics[width=7.8cm,clip]{./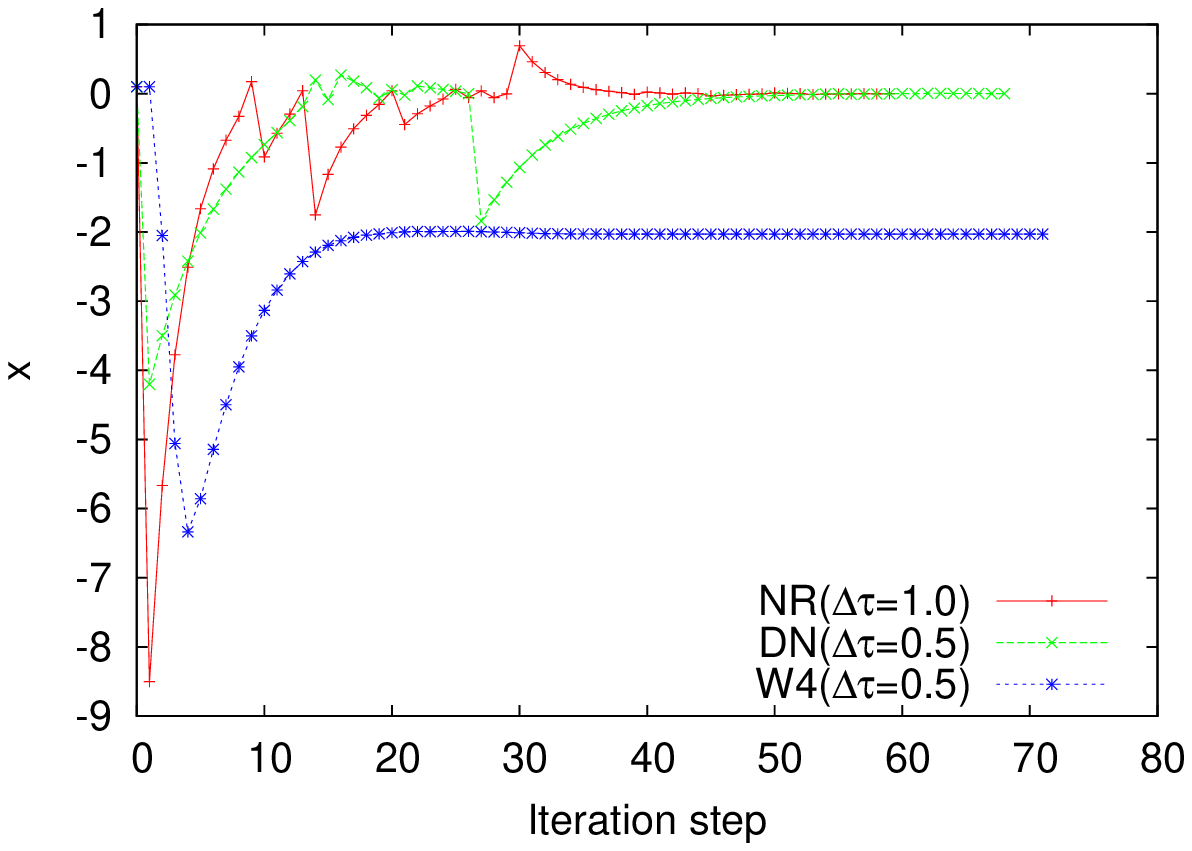} &
  \includegraphics[width=7.8cm,clip]{./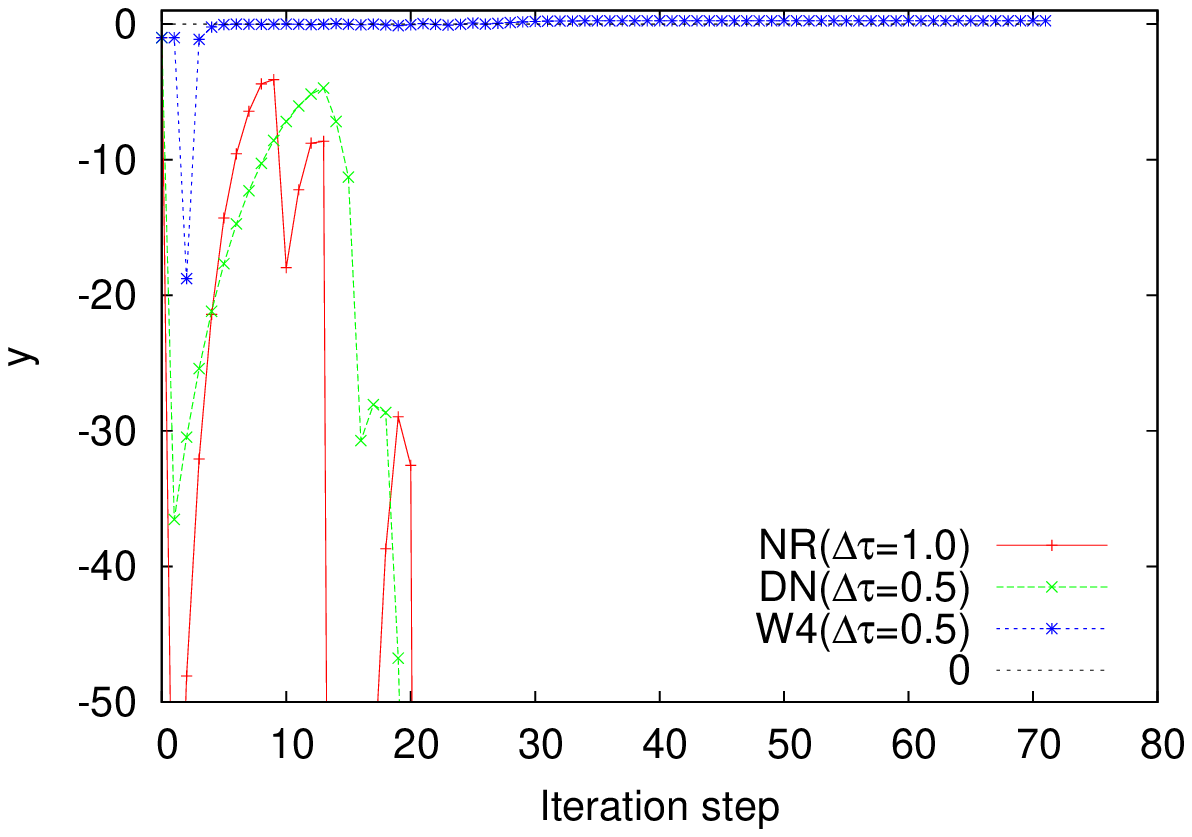}\\
  \includegraphics[width=7.8cm,clip]{./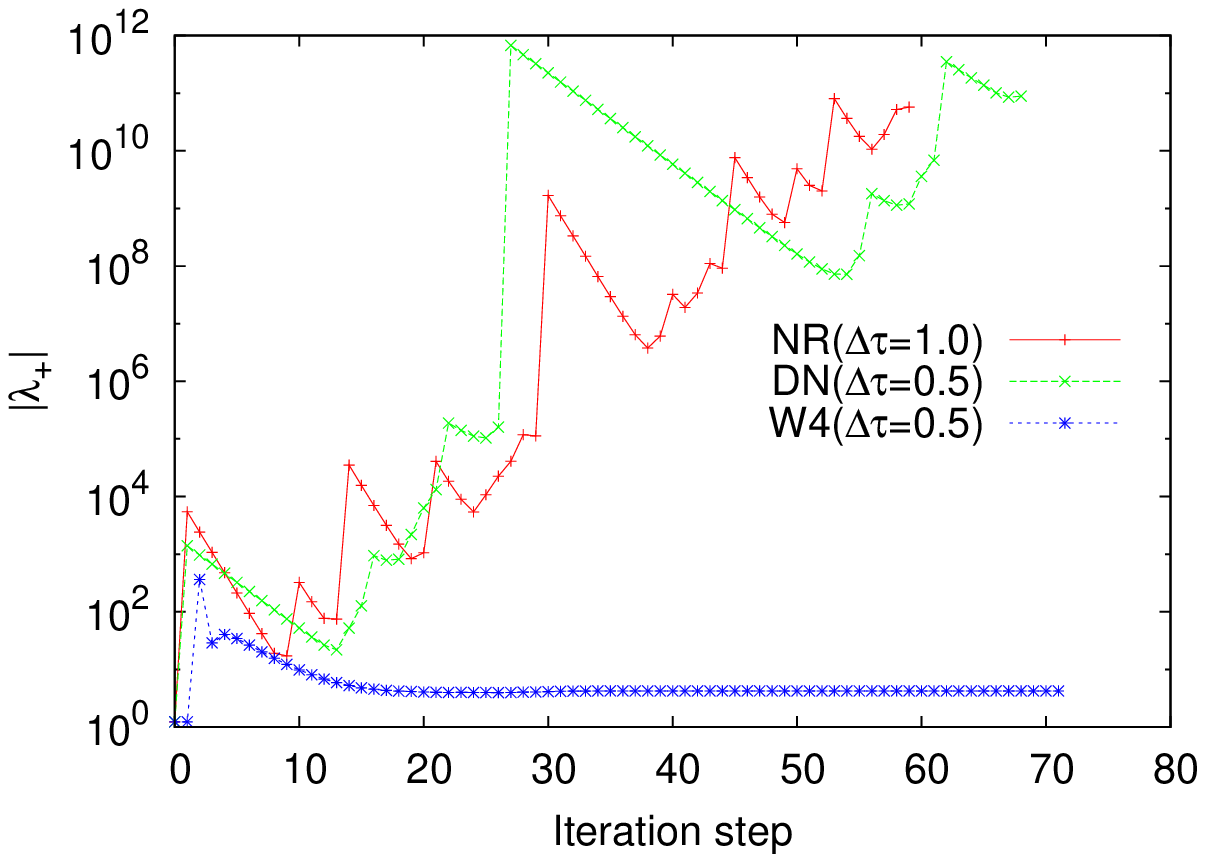} &
  \includegraphics[width=7.8cm,clip]{./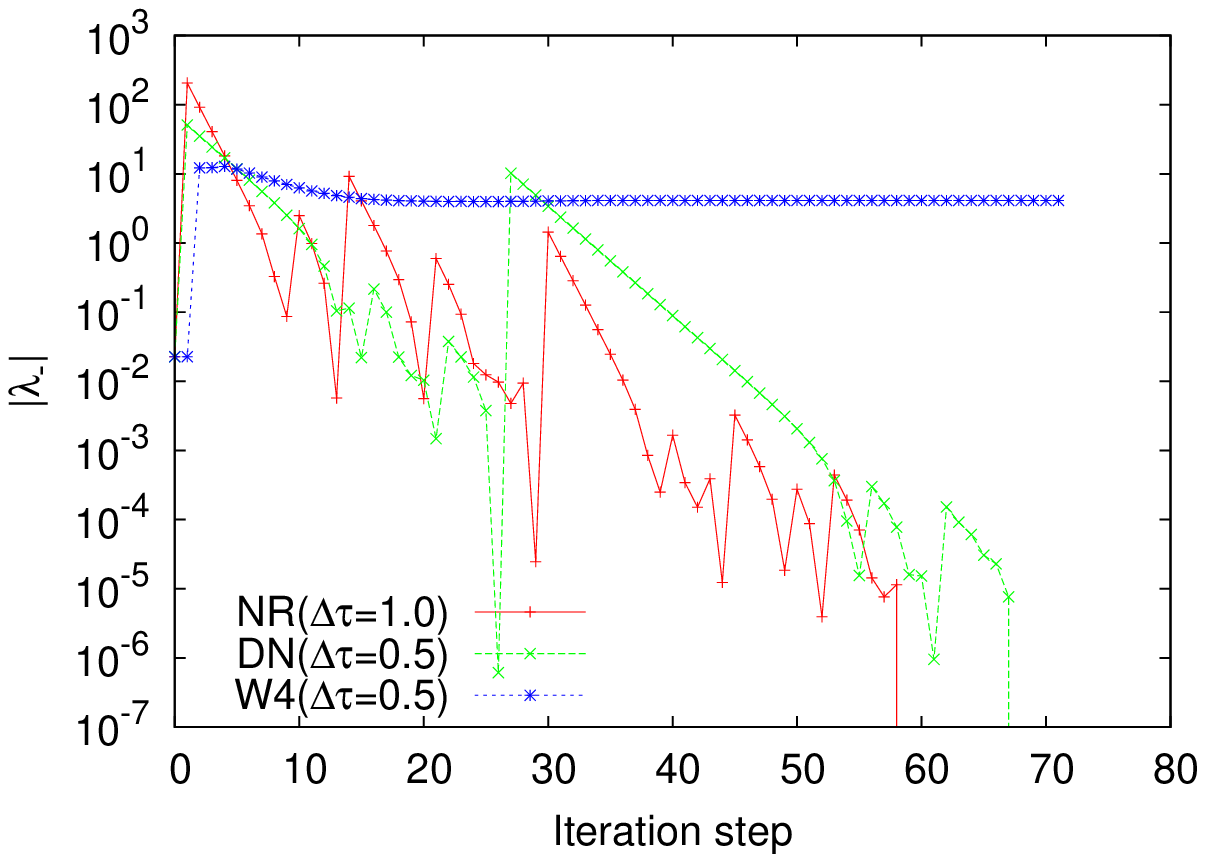}
 \end{tabular}
 \caption{The evolutionary sequences of $x$~(upper left),
 $y$~(upper right), $|\lambda_{+}|$~(lower left)
 and $|\lambda_{-}|$~(lower right) generated by
 the iteration maps of the NR, DN and W4 methods.
 $\lambda_{\pm}$ are the maximum and minimum eigenvalueas
 of the Jacobian matrix, respectively.
 }
 \label{fig:FProblem0_detail}
\end{figure}

The iteraion map for the DN method can be expressed in terms of these
eigenvalues and eigenvectors as
\begin{align}
 \label{eq:NRPP}
 \bm{x}_{n+1} =& \bm{x}_{n} - P_n\Lambda^{-1}_nP^{-1}_n \bm{F}(\bm{x}_n)\Delta\tau,
\end{align}
where we introduce
$P_n=Q_n/\det(Q_n),\ Q_n:=\left[\bm{v}^{+}_n\ \bm{v}^{-}_n\right]$,
 and
$\Lambda^{-1}_n={\rm diag}\left[\frac{1}{\lambda^{+}_n}\ \frac{1}{\lambda^{-}_n}\right]$ for nonzero $\lambda^{-}_{n}$
and $\Lambda^{-1}_n={\rm diag}\left[\frac{1}{\lambda^{+}_n}\ 0\right]$ for $\lambda^{-}_{n}=0$.
Note that the determinant of $P_n$ is unity.
On the other hand, the iteration map for the W4 map is given as
\begin{subequations}
 \label{eq:W4PP}
  \begin{align}
   \bm{x}_{n+1} =& \bm{x}_{n} + P_n \bm{p}_{n}\Delta\tau,\\
   \bm{p}_{n+1} =& \left(1 -2\Delta\tau\right)\bm{p}_{n}
   -\Lambda^{-1}_n P^{-1}_n \bm{F}(\bm{x}_n)\Delta\tau.
  \end{align}
\end{subequations}
Note in particular that the eigenvalues and eigenvectors are
defined pointwise and change for each iteration
in general.

In the upper panels of Fig.~\ref{fig:FProblem0}, we exhibit the Newton
basins for the NR~(left) and the W4~(right) methods.
The red, blue, and green colors correspond to the
initial conditions that produce one of the three solutions
indicated with the black crosses whereas
the black points indicate that no solution is reached within $1000$ iterations.
It is clear that the 
W4 method always finds some solution in sharp contrast to the NR method,
which sometimes fails to reach any of the solutions.
In the lower panels of Fig.~\ref{fig:FProblem0},
we show the specific sequences starting from the white circles
displayed in the upper panels.
The NR method reaches a solution very quickly
within $10$ iterations
normally whenever it is successful.
This happens usually when the initial guess is close to the solution.
On the other hand,
the W4 method needs more iterations
but always manages to find some solution.
In the following, we investigate the possible cause of this difference more
in detail, picking up the case for the initial guess~$(x_i,y_i)=(0.1,-1.0)$.\\

We display
in the upper panels of Fig.\ref{fig:FProblem0_detail}
three evolutions of~$x$~(left) and $y$~(right) by the NR method, the DN
method with $\Delta\tau=0.5$, and the W4 method with $\Delta\tau=0.5$.
One can see that $(x_n,y_n)$ go to $(0,-\infty)$ as $n\rightarrow \infty$
both for the NR method and DN methods.
In the lower panels of Fig.\ref{fig:FProblem0_detail},
we draw the evolutions of absolute values of the maximum~$(\lambda^{+}, left)$
and minimum $(\lambda^{-}, right)$ eigenvalues.
It is evident that the ratio of $|\lambda^{-}/\lambda^{+}|$
tends to zero again both in the NR and the DN method.
In what follows, we pay particular attention to this behavior
of $\lambda^{-}/\lambda^{+}$ and analyze
the situation when $|\lambda^{-}/\lambda^{+}|$
becomes quite small at $x\sim 0$.

\begin{lem}\label{lem:NRmapPP}
 Suppose there exists a complete set of eigenvectors~$\bm{v}^{\pm}_n\in \mathbb{R}^{2}$
 of the symmetric Jacobian matrix~$J_n$ at the $n$-th iteration step
and let $P_n$ be an $N\times N$ orthogonal matrix composed
 of~$\bm{v}^{\pm}_n$ and $\lambda^{\pm}_n$ be
 the corresponding eigenvalues,
 then the increment~$\bm{x}_{n+1}-\bm{x}_{n}$ at this
 particular step tends to be aligned with $\bm{v}^{-}_{n}$
 if $|\lambda^{-}_n/\lambda^{+}_n|\rightarrow 0$ as $n\rightarrow\infty$.
\end{lem}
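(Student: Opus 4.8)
The plan is to rewrite the Damped Newton increment of Eq.~\eqref{eq:NRPP} in the eigenbasis of the symmetric Jacobian $J_n$ and to read off that its component along $\bm{v}^{+}_n$ is suppressed, relative to its component along $\bm{v}^{-}_n$, by precisely the factor $|\lambda^{-}_n/\lambda^{+}_n|$. First I would note that, because $J_n$ is real symmetric, the orthonormal eigenvectors of Eq.~\eqref{eq:FProblem0_eigenvector} give an orthogonal $P_n$, so $P_n^{-1}=P_n^{T}$ and $P_n\Lambda^{-1}_nP^{-1}_n=J_n^{-1}$; hence the increment at the $n$-th step is $\bm{x}_{n+1}-\bm{x}_n=-\Delta\tau\,J_n^{-1}\bm{F}(\bm{x}_n)$, with the positive scalar $\Delta\tau$ irrelevant to its direction.

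Next I would expand the source term in the orthonormal eigenbasis as $\bm{F}(\bm{x}_n)=f^{+}_n\bm{v}^{+}_n+f^{-}_n\bm{v}^{-}_n$ with $f^{\pm}_n:=\bm{v}^{\pm}_n\cdot\bm{F}(\bm{x}_n)$, so that $J_n^{-1}\bm{F}(\bm{x}_n)=(f^{+}_n/\lambda^{+}_n)\,\bm{v}^{+}_n+(f^{-}_n/\lambda^{-}_n)\,\bm{v}^{-}_n$. The ratio of the $\bm{v}^{+}_n$-coefficient to the $\bm{v}^{-}_n$-coefficient of $\bm{x}_{n+1}-\bm{x}_n$ is then $(f^{+}_n/f^{-}_n)(\lambda^{-}_n/\lambda^{+}_n)$: as soon as the prefactor $f^{+}_n/f^{-}_n$ is known to stay bounded, the hypothesis $|\lambda^{-}_n/\lambda^{+}_n|\to0$ drives this ratio to zero, i.e.\ the $\bm{v}^{+}_n$-part of the increment becomes negligible against its $\bm{v}^{-}_n$-part, which is exactly the claim that $\bm{x}_{n+1}-\bm{x}_n$ tends to align with $\bm{v}^{-}_n$.

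The step that is not routine — and what I expect to be the main obstacle — is establishing that $f^{+}_n/f^{-}_n$ does not blow up, equivalently that $\bm{F}(\bm{x}_n)$ keeps an $O(1)$ overlap with $\bm{v}^{-}_n$ rather than becoming asymptotically orthogonal to it as $\lambda^{-}_n$ shrinks; this is a genericity condition on the trajectory, not an unconditional inequality. For the concrete system of Eq.~\eqref{eq:FProblem0} I would verify it directly from Eqs.~\eqref{eq:FProblem0_eigenvalue}--\eqref{eq:FProblem0_eigenvector}: in the offending regime the iterates run off toward the line $x=0$ with $y\to-\infty$ (see Fig.~\ref{fig:FProblem0_detail}), where $\det J=x^2(2x-3y^2)\to0$, so $\lambda^{-}_n=\det J/\lambda^{+}_n\to0$ with $\lambda^{+}_n\sim y_n^2$ (so that $|\lambda^{-}_n/\lambda^{+}_n|\to0$ is indeed realized), while $\bm{v}^{-}_n\to(0,-1)^{T}$ and $F_2(\bm{x}_n)=x_n^2y_n-1\to-1$, giving $f^{-}_n=\bm{v}^{-}_n\cdot\bm{F}(\bm{x}_n)\to1\neq0$ with $f^{+}_n$ bounded. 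Away from such degenerate configurations one simply retains the nonvanishing overlap of $\bm{F}(\bm{x}_n)$ with $\bm{v}^{-}_n$ as an explicit hypothesis, and everything else is the eigenbasis bookkeeping above.
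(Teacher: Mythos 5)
Your proof is essentially the same as the paper's: both rewrite the NR/DN increment in the orthonormal eigenbasis of the symmetric $J_n$, so that $\bm{x}_{n+1}-\bm{x}_n = -\left(c^{+}_n/\lambda^{+}_n\right)\bm{v}^{+}_n - \left(c^{-}_n/\lambda^{-}_n\right)\bm{v}^{-}_n$ (the paper's $c^{\pm}_n$ are your $f^{\pm}_n$), and both then observe that the $\bm{v}^{+}_n$-component is suppressed by the factor $\lambda^{-}_n/\lambda^{+}_n$ relative to the $\bm{v}^{-}_n$-component. One thing you do better: you make explicit that the conclusion also needs the coefficient ratio $c^{+}_n/c^{-}_n$ to stay bounded (equivalently, that $\bm{F}(\bm{x}_n)$ keep an $O(1)$ overlap with $\bm{v}^{-}_n$), a genericity hypothesis the paper's one-line proof leaves implicit and only substantiates afterward via the Taylor expansions of $c^{\pm}$ near $x=0$; your direct check of this for Eqs.~\eqref{eq:FProblem0} matches that discussion.
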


\begin{proof}
 The iteration map~\eqref{eq:NRPP} with $\Delta\tau=1$ gives
 the increment in~$\bm{x}$ at the $n$-th step
 in terms of the eigenvectors as
\begin{eqnarray}
 \bm{x}_{n+1} -\bm{x}_{n} = -P_n\Lambda^{-1}_{n} P^{-1}_{n} \bm{F}(\bm{x}_{n})
  = -\frac{c^{+}_n}{\lambda^{+}_n}\bm{v}_n^{+} -\frac{c^{-}_n}{\lambda^{-}_n}\bm{v}_n^{-}
  =  -\frac{\lambda^{-}_n}{\lambda^{+}_n}\frac{c^{+}_n}{\lambda^{-}_n}\bm{v}_n^{+} -\frac{c^{-}_n}{\lambda^{-}_n}\bm{v}_n^{-}
  \rightarrow  -\frac{c^{-}_n}{\lambda^{-}_n}\bm{v}_n^{-},
\end{eqnarray}
where we introduce the inner product
 $c^{\pm}_n:=(\bm{v}^{\pm}_n)^{T}\bm{F}(\bm{x}_n)$ and
 take the limit~$|\lambda^{-}_n/\lambda^{+}_n|\rightarrow 0$.
\end{proof}
This lemma explains the behavior of the sequences
produced by the iteration map of the DN method.
In fact, Taylor-expanding
the eigenvalues and eigenvectors
as well as the coefficients~$c^{\pm}$ in the neighborhood
of $x=0$,
\begin{eqnarray}
 \lambda^{+} = y^2 +2x +4x^2 +\mathcal{O}\left(x^3\right),\quad
 \lambda^{-} = -3x^2 +\mathcal{O}\left(x^3\right),\\
 \bm{v}^{+} = \left(-1 +\frac{2x^2}{y^2} +\mathcal{O}\left(x^3\right)\quad
	      -\frac{2x}{y} +\frac{4x^2}{y^2}+\mathcal{O}\left(x^3\right)\right)^{T},\\
 \bm{v}^{-} = \left(
	      \frac{2x}{y}
	      -\frac{4x^2}{y^2}+\mathcal{O}\left(x^3\right)\quad
	       -1 +\frac{2x^2}{y^2} +\mathcal{O}\left(x^3\right)
			    \right)^{T},\\
 c^{+} = 4 +\left(-y^2 +\frac{2}{y}\right)x +\mathcal{O}\left(x^2\right),\quad
  c^{-} = 1 -\frac{8x}{y} +\mathcal{O}\left(x^2\right),
\end{eqnarray}
one realizes that once the sequence enters the region with~$y<0$,
it heads to $x=0$ and $y=-\infty$.
This is also understood from the fact that
$\bm{v}^{+}$ is the only way out of this region.

\begin{lem}\label{lem:W4mapPP}
 Suppose there exists a complete set of eigenvectors~$\bm{v}^{\pm}_{n-1}\in \mathbb{R}^{2}$
 of the symmetric Jacobian matrix~$J_{n-1}$ also at the $(n-1)$-th step and
 let $P_{n-1}$ be an $N\times N$ orthogonal matrix composed
 of~$\bm{v}^{\pm}_{n-1}$ and $\lambda^{\pm}_{n-1}$ be the corresponding eigenvalues,
 then in the iteration map of the W4 method with~$\Delta\tau=0.5$
 the increment in~$\bm{x}$ is not aligned completely
 with $\bm{v}^{-}_{n-1}$ but depends also on $\bm{v}^{+}_{n-1}$
 as
 $|\lambda^{-}_{n-1}/\lambda^{+}_{n-1}|\rightarrow 0$
 if $\bm{v}^{+}_{n}\neq\bm{v}^{+}_{n-1}$.
\end{lem}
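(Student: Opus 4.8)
The plan is to exploit the special value $\Delta\tau=1/2$ fixed in the hypothesis. Because then $1-2\Delta\tau=0$, the momentum update in~\eqref{eq:W4PP} retains no memory beyond the immediately preceding step,
\[
 \bm{p}_{n} = -\tfrac{1}{2}\,\Lambda^{-1}_{n-1}P^{-1}_{n-1}\bm{F}(\bm{x}_{n-1}),
\]
so that the coordinate increment at this step reads
\[
 \bm{x}_{n+1}-\bm{x}_{n} = \Delta\tau\,P_{n}\bm{p}_{n} = -\tfrac{1}{4}\,P_{n}\Lambda^{-1}_{n-1}P^{-1}_{n-1}\bm{F}(\bm{x}_{n-1}).
\]
The point to stress is that the spectral rescaling $\Lambda^{-1}_{n-1}$ and the projection $P^{-1}_{n-1}$ are performed in the eigenframe of $J_{n-1}$, while the final reconstruction $P_{n}$ uses the eigenvectors $\bm{v}^{\pm}_{n}$ of $J_{n}$; it is this mismatch that will produce the claimed coupling.

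Following the proof of Lemma~\ref{lem:NRmapPP}, I would set $c^{\pm}_{n-1}:=(\bm{v}^{\pm}_{n-1})^{T}\bm{F}(\bm{x}_{n-1})$. Since the Jacobian is symmetric here, $P_{n-1}$ is orthogonal, and the expression above becomes, up to an overall sign from the normalisation $P=Q/\det Q$ which is irrelevant to the direction of the increment,
\[
 \bm{x}_{n+1}-\bm{x}_{n} = -\frac{1}{4\lambda^{-}_{n-1}}\left(\frac{\lambda^{-}_{n-1}}{\lambda^{+}_{n-1}}\,c^{+}_{n-1}\,\bm{v}^{+}_{n} + c^{-}_{n-1}\,\bm{v}^{-}_{n}\right).
\]
As $|\lambda^{-}_{n-1}/\lambda^{+}_{n-1}|\to 0$ the first term is negligible next to the second, so the increment becomes $-\tfrac{1}{4}(c^{-}_{n-1}/\lambda^{-}_{n-1})\bm{v}^{-}_{n}$, in the same asymptotic sense as in Lemma~\ref{lem:NRmapPP}: it tends to be aligned with $\bm{v}^{-}_{n}$, the eigenvector at the \emph{current} step, rather than with $\bm{v}^{-}_{n-1}$.

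To conclude I would expand $\bm{v}^{-}_{n}$ in the orthonormal basis $\{\bm{v}^{+}_{n-1},\bm{v}^{-}_{n-1}\}$,
\[
 \bm{v}^{-}_{n} = \bigl((\bm{v}^{-}_{n})^{T}\bm{v}^{+}_{n-1}\bigr)\,\bm{v}^{+}_{n-1} + \bigl((\bm{v}^{-}_{n})^{T}\bm{v}^{-}_{n-1}\bigr)\,\bm{v}^{-}_{n-1}.
\]
If $\bm{v}^{+}_{n}=\bm{v}^{+}_{n-1}$ as directions, then $\bm{v}^{-}_{n}\perp\bm{v}^{+}_{n-1}$, the first coefficient vanishes, and the increment degenerates into a pure multiple of $\bm{v}^{-}_{n-1}$ --- the same trap identified for the DN map in Lemma~\ref{lem:NRmapPP}. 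When instead $\bm{v}^{+}_{n}\neq\bm{v}^{+}_{n-1}$ (equivalently, in two dimensions, $\bm{v}^{-}_{n}$ is not parallel to $\bm{v}^{-}_{n-1}$), the coefficient $(\bm{v}^{-}_{n})^{T}\bm{v}^{+}_{n-1}$ is nonzero, so the increment keeps a component along $\bm{v}^{+}_{n-1}$ of magnitude $\tfrac{1}{4}\,|c^{-}_{n-1}/\lambda^{-}_{n-1}|\,|(\bm{v}^{-}_{n})^{T}\bm{v}^{+}_{n-1}|$, which is in fact large precisely when $\lambda^{-}_{n-1}$ is small. This is the route by which the W4 trajectory can leave the region $x\sim 0$ along $\bm{v}^{+}$, a route that is closed to the NR and DN maps.

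The obstacle is not computational but one of careful indexing: the heart of the matter is that at $\Delta\tau=1/2$ the W4 momentum $\bm{p}_{n}$ is assembled in the eigenframe of step $n-1$ but is then rotated by $P_{n}$ in the coordinate update, so a change of the eigenvectors between two successive steps is exactly what feeds the dominant (and dangerous) $\bm{v}^{-}$ contribution back into the escape direction $\bm{v}^{+}_{n-1}$ --- hence the hypothesis $\bm{v}^{+}_{n}\neq\bm{v}^{+}_{n-1}$. The remaining points --- the sign in $P=Q/\det Q$ and the degenerate case $\lambda^{-}_{n-1}=0$, where $\Lambda^{-1}_{n-1}$ carries a zero entry --- are harmless: one works along the portion of the sequence where $\lambda^{-}_{n-1}\neq 0$, and only the asymptotic direction of the increment enters the statement.
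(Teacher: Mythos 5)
Your argument is correct and follows the same route as the paper's proof: use $\Delta\tau=1/2$ to eliminate the momentum memory, obtain $\bm{x}_{n+1}-\bm{x}_{n}=-\tfrac14 P_n\Lambda^{-1}_{n-1}P^{-1}_{n-1}\bm{F}(\bm{x}_{n-1})$, take the degenerate limit to get alignment with $\bm{v}^-_n$, and then re-expand $\bm{v}^-_n$ in the $(n-1)$-th eigenframe to expose the $\bm{v}^+_{n-1}$ component. The only difference is cosmetic: you make the expansion coefficients explicit as inner products and add remarks on the sign normalisation and the $\lambda^-=0$ edge case, whereas the paper simply calls them ``certain constants $a^{\pm}$.''
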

\begin{proof}
 From the iteration map~\eqref{eq:W4PP} of the W4 method
 with $\Delta\tau=0.5$,
 we can obtain 
\begin{eqnarray}
 \bm{x}_{n+1} -\bm{x}_{n} = -\frac{1}{4}P_n\Lambda^{-1}_{n-1} P^{-1}_{n-1} \bm{F}(\bm{x}_{n-1})
  = -\frac{1}{4}\frac{\lambda^{-}_{n-1}}{\lambda^{+}_{n-1}}\frac{c^{+}_{n-1}}{\lambda^{-}_{n-1}}\bm{v}_n^{+} 
  -\frac{1}{4}\frac{c^{-}_{n-1}}{\lambda^{-}_{n-1}}\bm{v}_{n}^{-}
  \rightarrow   -\frac{1}{4}\frac{c^{-}_{n-1}}{\lambda^{-}_{n-1}}\bm{v}_{n}^{-}.
\end{eqnarray}
 We find the same alignment of the increment with the
 eigenvector at the $n$-th iteration step.
 Note, however, that the degeneracy of the Jacobian, or
 $|\lambda^{-}_{n-1}/\lambda^{+}_{n-1}|\approx 0$,
 is assumed to occur at the $(n-1)$-th iteration step this time.
 The above equation implies that the alignment occurs
 one step later.
Since we can express
 $\bm{v}^{-}_{n}$ in terms of the eigenvector at the $(n-1)$-th
iteration step as
 \begin{eqnarray}
 \bm{v}^{-}_{n} = a^{+}\bm{v}^{+}_{n-1} +a^{-}\bm{v}^{-}_{n-1},
 \end{eqnarray}
where $a^{\pm}$ are certain constants,
the increment in $\bm{x}_{n}$ can be rewritten as
\begin{eqnarray}
 \bm{x}_{n+1} -\bm{x}_{n} =
  -\frac{a^{+}c^{-}_{n-1}}{4\lambda^{-}_{n-1}}\bm{v}_{n-1}^{+}
  -\frac{a^{-}c^{-}_{n-1}}{4\lambda^{-}_{n-1}}\bm{v}_{n-1}^{-},
\end{eqnarray}
 and hence includes the contribution from $\bm{v}^{+}_{n-1}$
unless $a^{+}$ is exactly zero, $a^{+}=0$.
\end{proof}
The important thing in the above lemma is that
the degeneracy of the Jacobian matrix at a certain iteration step
 affects the direction of the sequence at the next step
 but $\bm{v}^{+}_{n-1}$ is the direction of crucial importance
 so that the sequence could get out of the vicinity of $\bm{x}=0$,
in which the sequence of the DN method is trapped.

\begin{figure}[t]
 \begin{tabular}{cc}
  \includegraphics[width=7.8cm,clip]{./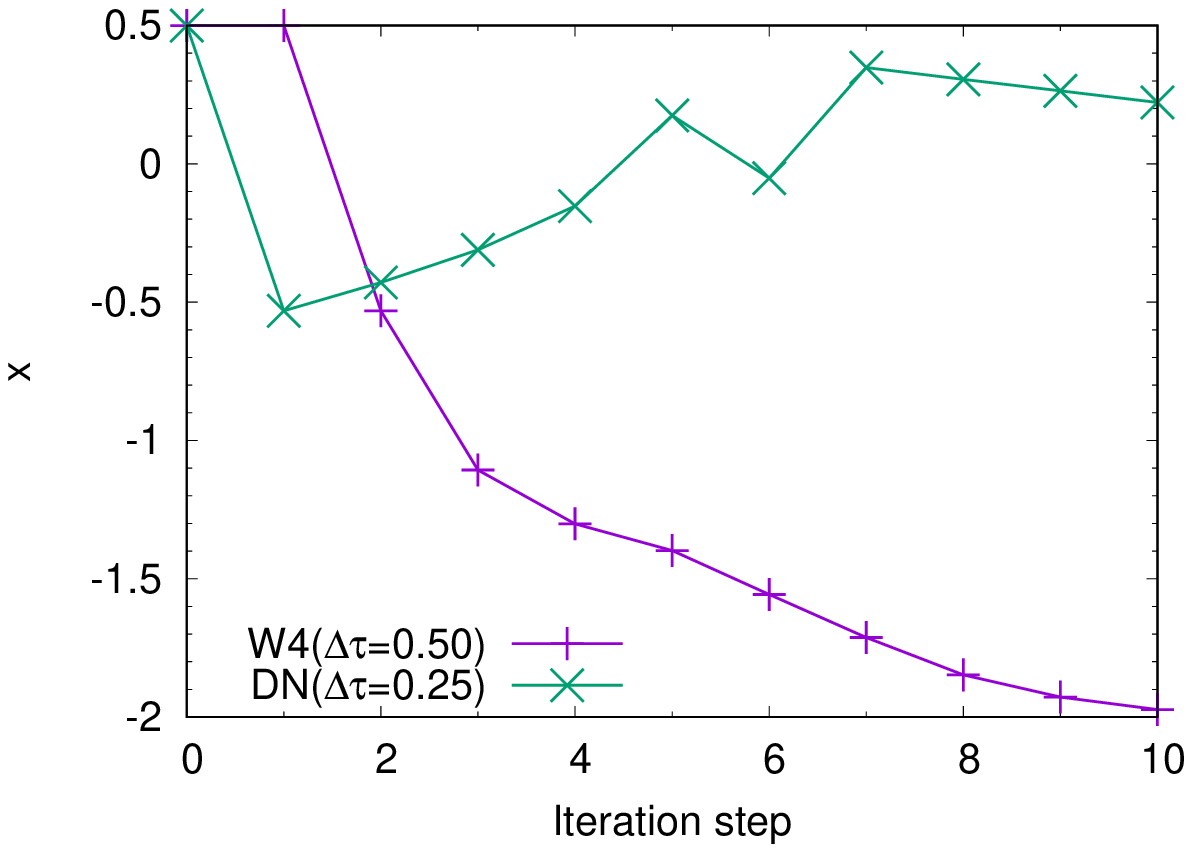} &
  \includegraphics[width=7.8cm,clip]{./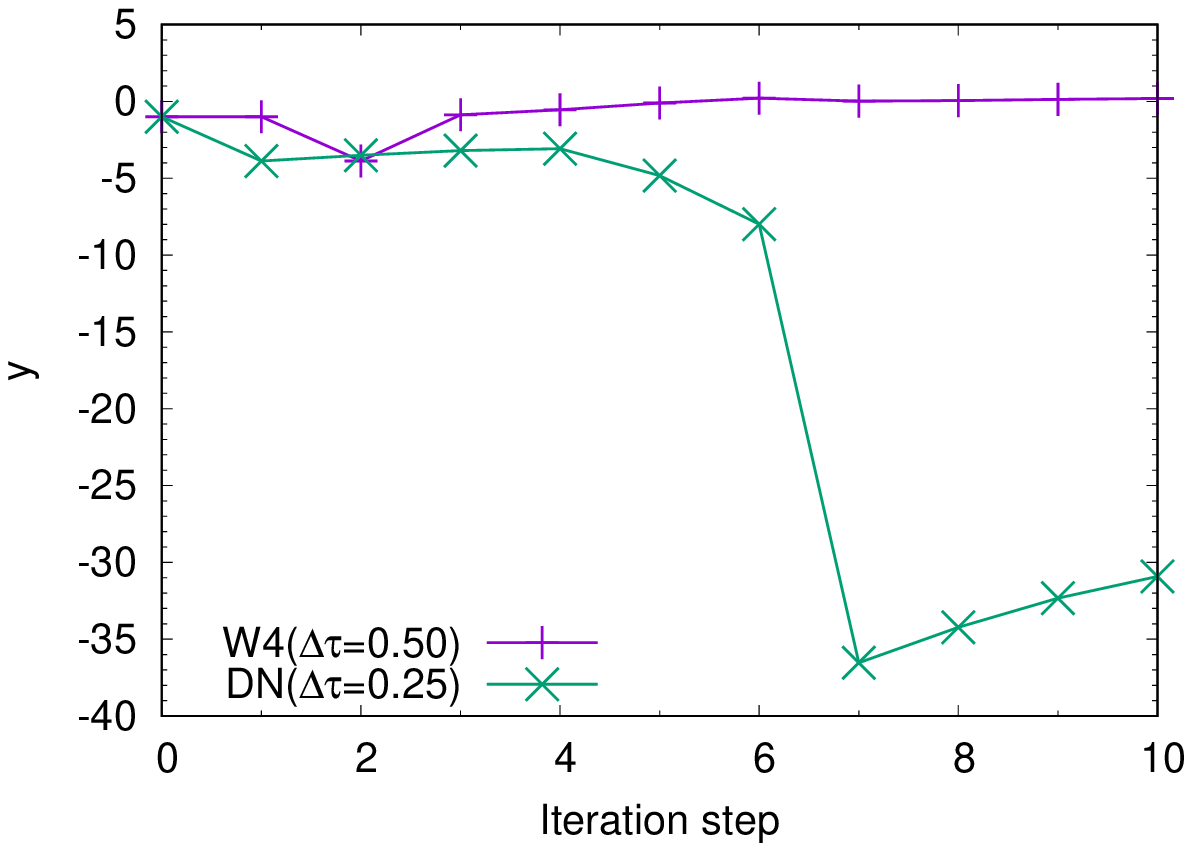}
 \end{tabular}
 \caption{The time evolutions of $x$~(left) and $y$~(right)
 for the DN method with $\Delta\tau=0.25$
 and the W4 method with $\Delta\tau=0.5$.
 }
 \label{fig:FProblem0_inertia}
\end{figure}
Fig.~\ref{fig:FProblem0_inertia} shows the specific evolutions of $x$~(left)
and $y$~(right) for the iteration maps of the DN method with $\Delta\tau=0.25$
and of the W4 method with $\Delta\tau=0.5$,
both starting from the same initial condition~$(x,y)=(0.5,-1.0)$
and $(p,q)=(0,0)$.
Note that $(x_1,y_1)$ at the first step in the DN method is
identical to $(x_2,y_2)$ in the W4 method
but the following evolutions are different from each other.
In fact,
$x_2$ in the DN method gets closer to $x=0$
whereas $x_3$ in the W4 method is farther away,
since the momentum gained in the previous steps
prevents it from changing the direction of motion
in the latter method.

\section{Conclusion}\label{sec:conclusion}
%
We have proposed a new iterative method, which we named the W4 method, to solve nonlinear systems of equations.
In this paper, we have considered a couple of representative problems,
for which the Newton-Raphson method, or the NR method,
fails to find any one of the solutions and have applied the new method
to them to see if any improvement is made.
The results are summarized in Table~\ref{tab:W4_ex0_simple1d} for
the single-variable problems
and in Fig.~\ref{fig:ex0_simple2d_NR} and~\ref{fig:FProblem_W4} for the multi-variable problems.\\

Our findings are listed as follows:
 \begin{enumerate}
  \item When the NR method or its modified version, the DN method, fails
	in these problems, their iteration maps produce sequences, which
	either diverge or oscillate as shown in Fig.~\ref{fig:FProblem_W4}.
  \item The W4 method, which
	introduces the second time derivative term,
	can be hence regarded as an extension of the NR method as argued
	in Sec.~\ref{ssec:W41D} and Sec.~\ref{ssec:W4}.
  \item The W4 method has the same local convergence 
	as the NR method is well-known to possess, as demonstrated 
	in Sec.~\ref{ssec:W4}.
  \item In the W4 method the Newton basins that lead to no solution in the NR method
	are eliminated as exhibited in Figs.~\ref{fig:ex0_simple2d_NR},  \ref{fig:FProblem_W4} and \ref{fig:OProblem}.
	This strongly suggests that the W4 method has a
	global-convergence property in
	multi-variable problems.
  \item In the W4 method, the convergence occurs only linearly just as in
	the DN method.
	The NR method, in contrast, has a quadratic convergence nature.
	This was discussed in
	Sec.~\ref{ssec:NR} and Sec.~\ref{ssec:W4}.
  \item The computational cost of the current version of the W4 method,
	which employs the UDL decomposition of the Jacobian matrix, is
	no higher than that of the NR method because of the
	decomposition of Jacobian into $U,D$ and $L$.
 \end{enumerate}

We believe that our new method, having the better global-convergence
nature, should be very useful in many fields of science, particularly
when the NR method fails to work more often than not.

\section{Acknowledgements}
We would like to thank K.-i Maeda for helpful comments.  We are grateful
to Y.~Eriguchi for valuable comments on the preliminary draft.  This
work was supported by JSPS KAKENHI Grant Number 16K17708, 16H03986,
17K18792.  R.~H. was supported by JSPS overseas research fellowship
No.29-514.  K.~F was supported by JSPS postdoctoral fellowships
No.16J10223.

\bibliographystyle{elsarticle-num}

\begin{thebibliography}{10}
\expandafter\ifx\csname url\endcsname\relax
  \def\url#1{\texttt{#1}}\fi
\expandafter\ifx\csname urlprefix\endcsname\relax\def\urlprefix{URL }\fi
\expandafter\ifx\csname href\endcsname\relax
  \def\href#1#2{#2} \def\path#1{#1}\fi

\bibitem{kelley2003}
C.~T. Kelley, Solving nonlinear equations with Newton's method, Vol.~1, Siam,
  2003.

\bibitem{jablonski1980}
A.~Jablonski, Monte carlo algorithm for solving systems of non-linear
  equations, Journal of Computational and Applied Mathematics 6~(3) (1980)
  171--175.

\bibitem{ortega1970}
J.~M. Ortega, W.~C. Rheinboldt, Iterative solution of nonlinear equations in
  several variables, Vol.~30, Siam, 1970.

\bibitem{broyden1965}
C.~G. Broyden, A class of methods for solving nonlinear simultaneous equations,
  Mathematics of computation 19~(92) (1965) 577--593.

\bibitem{householder1970}
A.~S. Householder, The numerical treatment of a single nonlinear equation.

\bibitem{ramos2015}
H.~Ramos, J.~Vigo-Aguiar, The application of newton’s method in vector form
  for solving nonlinear scalar equations where the classical newton method
  fails, Journal of Computational and Applied Mathematics 275 (2015) 228--237.

\bibitem{ramos2017}
H.~Ramos, M.~T.~T. Monteiro, A new approach based on the newton’s method to
  solve systems of nonlinear equations, Journal of Computational and Applied
  Mathematics 318 (2017) 3--13.

\bibitem{kou2006}
J.~Kou, Y.~Li, X.~Wang, Efficient continuation newton-like method for solving
  systems of non-linear equations, Applied mathematics and computation 174~(2)
  (2006) 846--853.

\bibitem{wu2007}
X.~Wu, Note on the improvement of newton's method for system of nonlinear
  equations, Applied Mathematics and Computation 189~(2) (2007) 1476--1479.

\bibitem{hueso2009}
J.~L. Hueso, E.~Mart{\'\i}nez, J.~R. Torregrosa, Modified newton's method for
  systems of nonlinear equations with singular jacobian, Journal of
  Computational and Applied Mathematics 224~(1) (2009) 77--83.

\bibitem{silvester2000}
J.~R. Silvester, Determinants of block matrices, The Mathematical Gazette
  84~(501) (2000) 460--467.

\end{thebibliography}

\end{document}